\theoremstyle{definition}
\newtheorem{conv}{Convention}
\newtheorem{ques}{Question}
\newcommand{\C}{\mathbb C}
\newcommand{\Q}{\mathbb Q}
\newcommand{\Z}{\mathbb Z}
\newcommand{\Pp}{\mathbb P}
\newcommand{\G}{\mathbb G}
\newcommand{\sO}{\mathcal O}
\newcommand{\bM}{\overline{\mathcal M}}
\newcommand{\scrF}{\mathscr F}
\newcommand{\scrU}{\mathscr U}
\newcommand{\note}{note}
\title{A quantum splitting principle and an application}
\author{\firstname{Honglu} \lastname{Fan}}
\address{ETH Z\"urich\\
  Department of Mathematics\\
  R\"amistrasse 101\\
Z\"urich, 8092 (Switzerland)}
\email{honglu.fan@math.ethz.ch}
\thanks{
The author would like to thank his advisor Yuan-Pin Lee for a few weeks'
discussions about this work. In addition, the application part would not have
been completed without discussions with Junliang Shen. He would also like to
thank Feng Qu for helpful discussions, and Pierrick Bousseau for French translation. 
A large part of the work was finished
during a pleasant visit in National Taiwan University. The author had been
supported by NSF and grant ERC-2012-AdG-320368-MCSK in the group of Rahul
Pandharipande and is currently supported by SwissMAP.
}
\keywords{Gromov--Witten theory, splitting principle, projective bundle}
\subjclass{14N35}
\begin{document}

\begin{abstract}
We propose an analogy of splitting principle in genus-$0$ Gromov--Witten theory. More precisely, we show how the Gromov--Witten theory of a variety $X$ can be embedded into the theory of the projectivization of a vector bundle over $X$. An application is also given.
\end{abstract}
\begin{altabstract}
Nous proposons un analogue du principe de d\'ecomposition en th\'eorie de Gromov--Witten de genre z\'ero. Plus pr\'ecis\'ement, nous montrons comment r\'ealiser la th\'eorie de Gromov--Witten d'une vari\'et\'e X dans la th\'eorie de la projectivisation d'un fibr\'e vectoriel sur X. Nous donnons \'egalement une application.
\end{altabstract}

\maketitle

\setcounter{section}{-1}

\section{Introduction}
In geometry and topology, it is common to apply base-change in order to reduce a problem. For example, splitting principle in algebraic topology reduces a vector bundle to a split one by applying a base-change. More specifically, let $X$ be a smooth manifold and $V$ a vector bundle over $X$. The pull-back of $V$ along the projection $\pi: \Pp_X(V)\rightarrow X$ splits off a line bundle. Applying it multiple times eventually reduces $V$ to a split bundle. But to make it work, facts like the following are crucial.
\[
\text{The pull-back~}\pi^*: H^*(X;\Q) \rightarrow H^*(\Pp_X(V);\Q) \text{~is injective}.
\]
By using this, a lot of cohomological computations that is done on the pull-back of $V$ can be related back to the ones on $V$.

In this paper, our focus will be on Gromov--Witten theory. But the above technique cannot be applied easily because Gromov--Witten theory lacks a nice "pull-back" operation. Let $f:Y\rightarrow X$ be a map between smooth projective varieties. The complexity already appears when $f$ is an embedding. In this case, one might na\"ively hope for some statement in the spirit of the following.
\[
\text{(a part of) GWT}\text{($Y$) is a sub-theory of GWT}\text{($X$)},
\]
where GWT stands for Gromov--Witten theory. But this is not quite true. The
closest theorem might be the quantum Lefschetz theorem (\cites{CG,KKP}), which
only works in genus $0$ and has a restriction on the embedding.

Another common situation is when $f:Y\rightarrow X$ is a smooth morphism. When the fibration is nice (e.g. projectivization of vector bundles in splitting principle), one might wonder whether something like the following could happen.
\[
\text{GWT}\text{($X$) is a sub-theory of GWT}\text{($Y$)}.
\]
If a theorem with the above idea can be established, we might be able to imitate splitting principle to simplify some problems via base-changes. But again, the relation between GWT($X$) and GWT($Y$) seems complicated, and very few work has been done along these lines. 

We will work on the case when $Y$ is the projectivization of a vector bundle $V$ over $X$. In this paper, Corollary \ref{cor:main} realizes genus-$0$ GWT($X$) as a sub-theory of genus-$0$ GWT($\Pp(V)$) in a reasonably simple statement. To show an application, we generalize the main theorem in \cite{F} to $\Pp^n$-fibrations in genus $0$.

It's worth noting that splitting principle has already inspired some of the techniques in \cite{LLQW}. Roughly speaking, they apply base-changes along blow-ups to turn a vector bundle into a split one, and relate back via degeneration formula. But the relate-back process is very complicated, mainly due to the complexity of degeneration formula.

\subsection{Statements of results}
Let $X$ be a smooth projective variety, $V$ be a vector bundle over $X$ and
$\beta \in NE(X)$ where $NE(X)$ stands for the group of numerical curve classes.
We first prove the following theorem.
\begin{theo}\label{thm:main0}
If $V$ is globally generated, then the following holds.
\begin{equation*}
\langle \pi^*\sigma_1,\ldots, \pi^*\sigma_n \rangle_{0,n,\tilde\beta}^{\Pp_X(V), \sO(-1)}=\langle \sigma_1,\ldots, \sigma_n \rangle_{0,n,\beta}^{X, V},
\end{equation*}
where $\tilde\beta\in NE(\Pp(V))$ is an effective class such that $\pi_*\tilde\beta=\beta$ and $(\tilde\beta,c_1(\sO(1)))=0$.
\end{theo}
Here both sides of the equation are twisted Gromov--Witten invariants defined in
Section \ref{sec:twistedgw} equation \eqref{eqn:twistedgw}.
\begin{rmk}
If we put descendants into the invariants and match them up like in Theorem \ref{thm:main0}, the equality might \emph{not} hold. But it holds if we use the pull-back of $\psi_i$ in $\bM_{0,n}(X,\beta)$ on the left-hand side. The difference between $\psi$-classes and the pull-back of $\psi$-classes can be made explicit, but it's irrelevant in our paper.
\end{rmk}

Let $\lambda$ be the equivariant parameter in the twisted theory. We also use the notation $[\dotsb]_{\lambda^{-N}}$ for taking the $\lambda^{-N}$ coefficient. We have the following corollary.
\begin{cor}\label{cor:main}
If $V$ is globally generated, Gromov--Witten invariants of $X$ can be computed from that of $Y$ according to the following.
\[
\langle \sigma_1,\ldots, \sigma_n \rangle_{0,n,\beta}^{X}=\left[ \langle \pi^*\sigma_1,\ldots, \pi^*\sigma_n \rangle_{0,n,\tilde\beta}^{\Pp_X(V), \sO(-1)} \right]_{\lambda^{-N}},
\]
where $N=rank(V)+(\tilde\beta,det(T_\pi))$ with $T_\pi$ the relative tangent bundle, 
\end{cor}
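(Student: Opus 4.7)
\emph{Proof proposal.} The plan is to derive the corollary directly from Theorem \ref{thm:main0} combined with an elementary leading-coefficient analysis in $\lambda$. By Theorem \ref{thm:main0}, the right-hand side of the corollary already equals
\[
\bigl[\,\langle\sigma_1,\ldots,\sigma_n\rangle_{0,n,\beta}^{X,V}\,\bigr]_{\lambda^{-N}},
\]
so the task reduces to showing that the $\lambda^{-N}$ coefficient of the $V$-twisted genus-$0$ invariant on $X$ coincides with the untwisted invariant $\langle\sigma_1,\ldots,\sigma_n\rangle_{0,n,\beta}^X$.

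To do so, I would unpack the definition of the twisted theory from Section \ref{sec:twistedgw}, which takes the form
\[
\int_{[\bM_{0,n}(X,\beta)]^{vir}}\prod_i ev_i^*\sigma_i \cdot e_\lambda\bigl(R\pi_*f^*V\bigr)^{-1},
\]
where $\pi$ and $f$ denote the universal curve and universal stable map. Because $V$ is globally generated and we are in genus $0$, one has $H^1(C,f^*V)=0$ for every stable map $f$ (since $f^*V$ is a quotient of a trivial bundle on a genus-$0$ nodal curve), so $R\pi_*f^*V$ is an honest vector bundle of rank $M:=\text{rank}(V)+(\beta,c_1(V))$ by Riemann--Roch. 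The equivariant Euler class of a rank-$M$ bundle $E$ has the shape $\lambda^M+c_1(E)\lambda^{M-1}+\cdots$, so its inverse expands as $\lambda^{-M}-c_1(E)\lambda^{-M-1}+\cdots$. Integrating against $\prod_i ev_i^*\sigma_i$ then shows that the $\lambda^{-M}$ coefficient of the twisted invariant is precisely the untwisted invariant $\langle\sigma_1,\ldots,\sigma_n\rangle_{0,n,\beta}^X$.

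It remains to verify $M=N$. For this I would invoke the relative Euler sequence
\[
0\to\sO\to\pi^*V\otimes\sO(1)\to T_\pi\to 0
\]
on $\Pp_X(V)$, which gives $\det T_\pi=\pi^*\det V\otimes \sO(r)$ with $r=\text{rank}(V)$. Pairing with $\tilde\beta$ and invoking the hypotheses $\pi_*\tilde\beta=\beta$ and $(\tilde\beta,c_1(\sO(1)))=0$ immediately yields $(\tilde\beta,\det T_\pi)=(\beta,c_1(V))$, hence $N=r+(\tilde\beta,\det T_\pi)=M$. The main thing to watch is really just bookkeeping: I would double-check that the sign conventions in the relative Euler sequence, in $\sO(\pm1)$, and in the equivariant parameter $\lambda$ defined in Section \ref{sec:twistedgw} are all mutually compatible. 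Apart from this minor check, no ingredient beyond Theorem \ref{thm:main0} is needed, and the corollary reduces to the one-line Laurent expansion above.
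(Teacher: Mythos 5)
Your proof is correct, and it is exactly the straightforward derivation the paper has in mind (the paper states the corollary without an explicit proof, immediately after Theorem \ref{thm:main0}, precisely because this Laurent-coefficient extraction is routine). The two key points you supply — that convexity of $V$ in genus $0$ makes $E_{0,n,\beta}^1=0$ so the twisting class is an honest bundle whose inverse equivariant Euler class begins $\lambda^{-M}(1-c_1\lambda^{-1}+\cdots)$, and that the relative Euler sequence together with the hypotheses $\pi_*\tilde\beta=\beta$ and $(\tilde\beta,c_1(\sO(1)))=0$ give $N=\operatorname{rank}(V)+(\tilde\beta,\det T_\pi)=\operatorname{rank}(V)+(\beta,c_1(V))=M$ — are exactly the ones needed, and both are carried out correctly.
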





As an application, we prove the following generalization of \cite{F} in genus $0$.
\begin{theo}\label{thm:main1}
Let $\pi_1:P_1\rightarrow X$, $\pi_2:P_2\rightarrow X$ be two projective smooth morphisms between smooth projective varieties whose fibers are isomorphic to $\Pp^n$. Suppose their Brauer classes are equal in $H^2_{\text{\'et}}(X,\mathbb G_m)$. Furthermore, suppose there exists a ring isomorphism 
\[
\scrF:H^*(P_1;\Q) \rightarrow H^*(P_2;\Q),
\]
such that
\begin{enumerate}
\item $\scrF$ sends the subgroup $H^*(P_1;\Z)/tor(H^*(P_1;\Z))$ into $H^*(P_2;\Z)/tor(H^*(P_2;\Z))$. Here $tor(-)$ denotes the torsion subgroup.
\item $\scrF$ restricts to identity on $H^*(X;\Q)$. Here we identify $H^*(X;\Q)$ as subrings under pull-backs of $\pi_1,\pi_2$.
\item $\scrF(c_1(\omega_{\pi_1}))=c_1(\omega_{\pi_2})$, where $\omega_{\pi_i}$ is the corresponding relative canonical sheaf.
\end{enumerate}
Then we have
\[
\langle \psi^{k_1}\sigma_1,\dotsc,\psi^{k_n}\sigma_n \rangle_{0,n,\beta}^{P_1}=\langle \psi^{k_1}\scrF\sigma_1,\dotsc,\psi^{k_n}\scrF\sigma_n \rangle_{0,n, \Psi(\beta)}^{P_2},
\]
where $\Psi:N_1(P_1)\rightarrow N_1(P_2)$ is the isomorphism induced by $\scrF$ under Poincar\'e duality.  
\end{theo}

\begin{rmk}
When one of $P_i$ is the projectivization of vector bundle, all these conditions are equivalent to the existence of vector bundles $V_1, V_2$ such that $c(V_1)=c(V_2)$ and $P_1=\Pp(V_1), P_2=\Pp(V_2)$. This will be explained in Lemma \ref{lem:projbdl}.
\end{rmk}
\begin{rmk}
The reduction technique in \cite{LLQW} by blow-ups does not seem to work in our situation, as the Brauer group is a birational invariant.
\end{rmk}

\subsection{Some further questions}
Our result is based on Theorem \ref{thm:vircls}. Note that this method seems to fail in high genus. Therefore a further question is the following.
\begin{ques}
Are there any similar results as Theorem \ref{thm:main0} or Corollary \ref{cor:main} in high genus?
\end{ques}

Although the proof of Theorem \ref{thm:main1} is to mainly demonstrate the use of ``splitting principle" type of idea in Gromov--Witten theory, we are still interested in the following question.
\begin{ques}
Is it essential to put the requirement that $P_1, P_2$ have the same Brauer class?
\end{ques}
In the meantime, we are wondering to what extent the topology can determine the GWT of a smooth fibration. In particular, one way to phrase our questions is the following.
\begin{ques}
Let $P_1\rightarrow X, P_2\rightarrow X$ be two smooth projective morphisms between smooth projective varieties.
Suppose all of their fibers are isomorphic to a certain variety $F$. Can we further impose a suitable isomorphism between $H^*(P_1,\Q)$ and $H^*(P_2,\Q)$, and conclude GWT($P_1$)=GWT($P_2$)?
\end{ques}

Finally, we want to remark that Theorem \ref{thm:vircls2} yields a number of new identities besides Thereom \ref{thm:main0}. They are summarized in section \ref{section:thmmain0}. They are interesting to us for a few reasons. For example they are relatively simple, especially because they are closed formulas and there is no generating series or mirror map involved (compare \cite{GB}). This seems to show us that the Gromov--Witten theory of $\Pp(V)$ (maybe twisted by $\sO(-1)$) and the Gromov--Witten theory of $X$ twisted by $V$ are closely related. Note that another evidence that already exists in literature is the striking similarity between their $I$-functions when $V$ splits. We suspect that these identities are a tip of the iceberg, and there might be a general relationship underneath. Therefore we would like to ask the following (vague) question.
\begin{ques}
Is there a reasonably nice relationship between GWT($\Pp(V)$) and the GWT(X) twisted by $V$, that includes our identities as special cases?
\end{ques}

\subsection{Structure of the paper}

In order to prove Theorem \ref{thm:main0}, we need to take a detour to the naturality problem of Gromov--Witten invariants under blow-up proposed by Y. Ruan in \cite{R}. A variation of the main theorem in \cite{L} under the equivariant setting (Theorem \ref{thm:vircls2}) is stated in section \ref{sec:constr}. Its weaker version (Theorem \ref{thm:vircls}) is finally proven in section \ref{sec:virpull}, and this already implies Theorem \ref{thm:main0} by comparing localization residues (section \ref{section:thmmain0}). For completeness, we finish the proof of Theorem \ref{thm:vircls2} in section \ref{sec:vircls2}. Finally as an application, we prove Theorem \ref{thm:main1} in section \ref{sec:Pnfibr}.

\section{The naturality of virtual class under certain blow-up}\label{sec:constr}
In this section, we take a detour to study the naturality of virtual class under blow-up. Keep in mind that our ultimate goal is still Theorem \ref{thm:main0}, and it will be proven in section \ref{section:thmmain0} when the tools are ready.

Recall a vector bundle $V$ over a variety $X$ is \emph{convex} if for any morphism $f:\Pp^1\rightarrow X$, $H^1(\Pp^1,f^*V)=0$. A smooth projective variety is called \emph{convex} if its tangent bundle is a convex vector bundle. Note that a globally generated vector bundle is always convex.

Let $X$ be a smooth projective variety. Let $V$ be a vector bundle of rank $\geq 2$ over $X$. We want to understand the Gromov--Witten theory of $\Pp(V)$. For our purpose, we will assume the following.
\begin{conv}
Assume $V$ is a globally generated vector bundle.
\end{conv}
This assumption can be realized by tensoring with a sufficiently ample line bundle. Define $Y=\Pp(V\oplus \sO)$. This projective bundle can be viewed as a compactification of the total space of $V$. The zero section induces an embedding $X\hookrightarrow Y$. We denote its image by $Y_0$. There is also an embedding $\Pp(V)\hookrightarrow Y$ whose image is denoted by $Y_\infty$ (the divisor at infinity).

$\C^*$ can act on $V$ by fixing the base and scaling the fibers (direct sum of weight $1$ representations). This will induce a $\C^*$ action on $Y$. Consider $\tilde Y=Bl_{Y_0}Y$. We denote the contraction by
\[
	\pi:\tilde Y\rightarrow Y.
\]
The $\C^*$ action uniquely lifts to $\tilde Y$. On the other hand, it is not hard to see \[\tilde Y\cong \Pp_{\Pp(V)}(\sO(-1)\oplus \sO).\] Again, it can be seen as the compactification of the line bundle $\sO(-1)$ over $\Pp(V)$. Denote the zero section by $\tilde Y_0$ and the infinity section by $\tilde Y_\infty$. One can check that \[\pi(\tilde Y_0)=Y_0 \quad,\quad \pi(\tilde Y_\infty)=Y_\infty.\]

This $\C^*$ action induces actions on moduli of stable maps. There is a stabilization morphism $\tau:\bM_{0,n}(\tilde Y,\pi^!\beta) \rightarrow \bM_{0,n}(Y,\beta)$. Because $V$ is globally generated, by \cite{L} we have (nonequivariantly)
\[
\tau_*[\bM_{0,n}(\tilde Y,\pi^!\beta)]^{vir}=[\bM_{0,n}(Y,\beta)]^{vir}.
\]
In this \note, we would like to partially extend the above identity to equivariant setting under the given $\C^*$ action. 
Let $Z\subset \bM_{0,n}(Y,\beta)$ be the locus consisting of the stable maps at least one of whose components map nontrivially to the closed subvariety $Y_\infty$. $Z$ is a closed substack. Denote $\scrU\subset \bM_{0,n}(Y,\beta)\backslash Z$ the complement. Let $[\scrU]^{vir,eq}$ denote the equivariant virtual class induced by the perfect obstruction theory coming from the moduli of stable map. Also denote $\tau^{-1}(\scrU)$ by $\tilde\scrU$. We will show the following.
\begin{theo}\label{thm:vircls}
The following holds for equivariant virtual classes.
\[
\tau_*[\tilde\scrU]^{vir,eq}=[\scrU]^{vir,eq}.
\]
\end{theo}


Once this is established, we can compare the localization residues, and eventually prove the following by virtual localization.

\begin{theo}\label{thm:vircls2}
The following holds for equivariant virtual classes.
\[
\tau_*[\bM_{0,n}(\tilde Y,\pi^!\beta)]^{vir,eq}=[\bM_{0,n}(Y,\beta)]^{vir,eq}.
\]
\end{theo}

We prove Theorem \ref{thm:vircls2} for the sake of completeness. But Theorem \ref{thm:vircls} is already strong enough to deduce Theorem \ref{thm:main0}.

\subsection{Towards the proof of Theorem \ref{thm:vircls}}
We start with the following lemma.
\begin{lem}\label{lem:embed}
Under the setting of Section \ref{sec:constr}, there exists a convex projective variety $\G$, an embedding $i:X\rightarrow\G$ and a globally generated vector bundle $V_\G$ such that $i^*V_\G=V$.
\end{lem}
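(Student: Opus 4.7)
The construction of $\mathbb{G}$ should go through Grassmannians, exploiting that $V$ being globally generated gives a classifying map to a Grassmannian. The plan is as follows.

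Since $V$ is globally generated of rank $r$, one has a surjection $\sO_X^{\oplus N}\twoheadrightarrow V$ for some $N$ (e.g.\ $N=h^0(X,V)$), which is equivalent to a classifying morphism
\[
\varphi:X\longrightarrow Gr(r,N)
\]
such that $\varphi^*Q=V$, where $Q$ denotes the tautological quotient bundle on the Grassmannian. The bundle $Q$ is globally generated, and the Grassmannian is convex because it is a rational homogeneous space: its tangent bundle is globally generated, hence has only non-negative summands after pullback along any $\Pp^1\to Gr(r,N)$ by Grothendieck's splitting.

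The classifying map $\varphi$ need not be an embedding, so I would rigidify it by pairing with a genuine projective embedding. Using that $X$ is projective, pick a very ample line bundle giving a closed embedding $\iota:X\hookrightarrow \Pp^M$. Then set
\[
\G:=Gr(r,N)\times \Pp^M,\qquad i:=(\varphi,\iota):X\longrightarrow \G,\qquad V_\G:=p_1^*Q,
\]
where $p_1$ is the projection to the first factor. The morphism $i$ is a closed embedding because its second component already is, and $i^*V_\G=\varphi^*Q=V$. The bundle $V_\G$ is globally generated because $Q$ is.

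It remains to check that $\G$ is convex. Since $T_{\G}\cong p_1^*T_{Gr(r,N)}\oplus p_2^*T_{\Pp^M}$, for any $f:\Pp^1\to\G$ we get
\[
f^*T_\G\ \cong\ (p_1\circ f)^*T_{Gr(r,N)}\oplus (p_2\circ f)^*T_{\Pp^M},
\]
and the $H^1$ of each summand vanishes by convexity of $Gr(r,N)$ and $\Pp^M$. Thus $\G$ is convex, completing the proof. There is no serious obstacle here; the only subtlety is the standard trick of taking a product with a projective space to upgrade the classifying morphism to an embedding while preserving convexity and the globally-generated bundle.
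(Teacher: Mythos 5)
Your proposal is correct and follows essentially the same route as the paper: classify the globally generated $V$ by a map to a Grassmannian pulling back the universal quotient bundle, take the product with a projective space carrying a closed embedding of $X$ to upgrade the classifying map to an embedding, and use the pullback of $Q$ (which is what the paper's $Q\boxtimes\sO_{\Pp^N}$ amounts to) as $V_\G$. The only addition is your explicit verification that the product is convex, which the paper takes as standard.
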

\begin{proof}
First of all because $V$ is globally generated, there is a map $f:X\rightarrow G(k,n)$ to some Grassmannian such that $f^*Q=V$ where $Q$ is the universal quotient bundle over $G(k,n)$. Notice $Q$ is globally generated. The only thing different in this situation is that $f$ may not be an embedding. However, we can pick an embedding $f':X\rightarrow \Pp^N$ for some N. Now they induce an embedding $f\times f':X\rightarrow G(k,n)\times \Pp^N$. It is not hard to see that $(f\times f')^* (Q\boxtimes \sO_{\Pp^N})=V$. $Q\boxtimes \sO_{\Pp^N}$ is still globally generated since it is a quotient of direct sums of trivial bundles. Let $\G=G(k,n)\times \Pp^N$ and $V_\G=Q\boxtimes \sO_{\Pp^N}$.
\end{proof}

Denote $Y_\G=\Pp_\G(V_\G\oplus \sO)$ and $\tilde Y_\G=\Pp_{\Pp_\G(V_\G\oplus\sO)}(\sO(-1)\oplus \sO)$. Similar as before, $\tilde Y_\G$ is the blow-up of $Y_\G$ at zero section. Let $Z_{\G}\subset \bM_{0,n}(Y_\G,\beta)$ be the closed substack containing stable maps with at least one component mapping nontrivially onto $(Y_{\G})_\infty$, the image of the embedding $\Pp_\G(V_\G)\subset \Pp_\G(V_\G\oplus\sO)$. Similarly, we denote $\tau_\G:\bM_{0,n}(\tilde Y_\G,\pi^!\beta)\rightarrow \bM_{0,n}(Y_\G,\beta)$, $\scrU_\G=\bM_{0,n}(Y_\G,\beta)\backslash Z_\G$ and $\tilde\scrU_\G=\tau_\G^{-1}(\scrU_\G)$. We have a similar $\C^*$ action on $Y_\G$, $\tilde Y_\G$ and their associated moduli of stable maps like before.
\begin{lem}\label{lem:convex}
Theorem \ref{thm:vircls} is true when $X=\G$ and $V=V_{\G}$.
\end{lem}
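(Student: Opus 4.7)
The plan is to combine the nonequivariant blow-up comparison of \cite{L} with a direct equivariant upgrade that is made essentially automatic by convexity of the target. First I would verify that $Y_\G$ is itself convex. Denoting $p\colon Y_\G\to\G$ the projection, the relative Euler sequence
\[
0 \to \sO \to p^*(V_\G\oplus\sO)\otimes \sO_{Y_\G}(1) \to T_{Y_\G/\G} \to 0
\]
expresses $T_{Y_\G/\G}$ as a quotient of a globally generated bundle, hence it is itself globally generated and in particular convex. Combined with convexity of $p^*T_\G$, the relative tangent sequence then forces $T_{Y_\G}$ to be convex. Consequently $\bM_{0,n}(Y_\G,\beta)$ is a smooth Deligne--Mumford stack of the expected dimension, and its virtual class, equivariantly or not, agrees with its fundamental class.

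Next I would invoke the main result of \cite{L}. The blow-up center $Y_0\cong\G$ has normal bundle $V_\G$ in $Y_\G$, which is globally generated and therefore convex, so that theorem yields the nonequivariant identity
\[
\tau_{\G,*}[\bM_{0,n}(\tilde Y_\G,\pi^!\beta)]^{vir}=[\bM_{0,n}(Y_\G,\beta)].
\]
To promote this equality to the equivariant setting, I would observe that the $\C^*$-action scaling the fibers of $V_\G$ lifts canonically through $Y_\G$, $\tilde Y_\G$ and the associated moduli stacks, and that the perfect obstruction theories, the stabilization map $\tau_\G$ and every intermediate construction appearing in the proof of \cite{L} are manifestly $\C^*$-equivariant. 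Since $\bM_{0,n}(Y_\G,\beta)$ is smooth, the equivariant refinement of the right-hand side is canonical, and the same chain of equalities runs verbatim in equivariant cohomology to yield
\[
\tau_{\G,*}[\bM_{0,n}(\tilde Y_\G,\pi^!\beta)]^{vir,eq}=[\bM_{0,n}(Y_\G,\beta)]^{vir,eq}.
\]

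Finally, the open immersions $\scrU_\G\hookrightarrow\bM_{0,n}(Y_\G,\beta)$ and $\tilde\scrU_\G=\tau_\G^{-1}(\scrU_\G)\hookrightarrow\bM_{0,n}(\tilde Y_\G,\pi^!\beta)$ are compatible with the restricted perfect obstruction theories, and $\tau_\G$ remains proper after base change to $\scrU_\G$. Restricting the displayed equality to $\scrU_\G$ therefore produces the desired identity $\tau_{\G,*}[\tilde\scrU_\G]^{vir,eq}=[\scrU_\G]^{vir,eq}$. The main obstacle I anticipate is rigorously justifying the equivariant upgrade in the middle step: one must check that every auxiliary moduli space and virtual pullback used in \cite{L} carries its natural $\C^*$-equivariant structure, but because the nonequivariant identity together with smoothness of the target already determines the equivariant lift uniquely, this should ultimately amount to bookkeeping.
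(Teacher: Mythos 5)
Your proposal hinges on the claim that $Y_\G$ is convex, and the argument you give for it does not hold. In the relative Euler sequence for $Y_\G = \Pp_\G(V_\G\oplus\sO)$, the middle term $p^*(V_\G\oplus\sO)\otimes\sO_{Y_\G}(1)$ is \emph{not} globally generated: while $p^*(V_\G\oplus\sO)$ is, the tautological line bundle $\sO_{Y_\G}(1)$ typically is not. Indeed $H^0(Y_\G,\sO_{Y_\G}(1))=H^0(\G,(V_\G\oplus\sO)^\vee)=H^0(\G,V_\G^\vee)\oplus\C$, and $V_\G^\vee$ has no sections when $V_\G$ is, say, the universal quotient bundle on a Grassmannian. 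In that case $\sO_{Y_\G}(1)$ has a one-dimensional space of sections, all of which vanish exactly on $(Y_\G)_\infty$, so $p^*(V_\G\oplus\sO)\otimes\sO_{Y_\G}(1)$ is only generated on $Y_\G\setminus(Y_\G)_\infty$. Consequently $T_{Y_\G/\G}$ is globally generated only on that open set, not on all of $Y_\G$. And $Y_\G$ really does fail to be convex in general: already for $X=\Pp^1$ and $V=\sO(2)\oplus\sO$, the surface $Y_\infty=\Pp_{\Pp^1}(\sO(2)\oplus\sO)\cong\mathbb F_2$ sits inside $Y$ with a $(-2)$-curve $C$ for which $H^1(\Pp^1,T_Y|_C)\ne 0$.

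This gap is fatal to the rest of the plan. Since $\bM_{0,n}(Y_\G,\beta)$ is not smooth, you cannot conclude that the nonequivariant identity of \cite{L} promotes canonically to an equivariant one (the comparison map $A^{\C^*}_*\to A_*$ has a kernel in general, so a nonequivariant equality of classes does not determine an equivariant lift unless both sides are fundamental classes of a smooth space). In effect your outline tries to prove the analogue of Theorem \ref{thm:vircls2} for $\G$ directly and then restrict, but Theorem \ref{thm:vircls2} is exactly what the paper has to bootstrap to \emph{after} proving Theorem \ref{thm:vircls}. The paper's actual route is quite different and avoids your missing ingredient entirely: it proves unobstructedness only on the open substack $\scrU_\G$ (using precisely the condition that the stable map has a point landing off $(Y_\G)_\infty$, plus the ``one point suffices'' criterion on $\Pp^1$), shows that the locus of maps entirely avoiding the blow-up center $Y_0$ is dense in $\scrU_\G$, and then exploits the dimension bound for equivariant Chow groups to transport the (tautological) identity over that dense open locus, where $\tau$ is an isomorphism, to all of $\scrU_\G$. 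You would need to replace your convexity claim with the unobstructedness of $\scrU_\G$ and then supply the density-plus-Chow-group argument; as written, the middle of your proof does not go through.
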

Its proof is given in Section \ref{sec:convex}. In Section \ref{sec:virpull} we show the above lemma implies Theorem \ref{thm:vircls}.

\subsection{The case when $X$ is convex}\label{sec:convex}
The argument here is basically parallel to the proof of \cite[Theorem 1.4]{L}. The situation here is even simpler, since the ``Assumption *" in \cite{L} is automatically satisfied because of the following. 
\begin{lem}\label{lem:unobs}
The open substack $\scrU_\G\subset\bM_{0,n}(Y_\G,\beta)$ is unobstructed, thus smooth.
\end{lem}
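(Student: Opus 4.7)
The plan is to prove pointwise unobstructedness: for every geometric point $(C,f)$ of $\scrU_\G$, we will show that $H^1(C,f^*T_{Y_\G})=0$. The standard perfect obstruction theory on $\bM_{0,n}(Y_\G,\beta)$ has its obstruction at $(C,f)$ controlled by this $H^1$ (the remaining contribution coming from deformations of $C$, which are unobstructed in genus zero), so a pointwise vanishing throughout $\scrU_\G$ forces $\scrU_\G$ to be smooth.

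First I would use the relative tangent sequence $0\to T_{Y_\G/\G}\to T_{Y_\G}\to\pi^*T_\G\to 0$, where $\pi\colon Y_\G\to\G$ is the projection. Convexity of $\G=G(k,n)\times\Pp^N$ together with the fact that $C$ has arithmetic genus zero gives $H^1(C,(\pi\circ f)^*T_\G)=0$, reducing the question to $H^1(C,f^*T_{Y_\G/\G})=0$. Next, the relative Euler sequence
\[
0\to\sO_{Y_\G}\to\pi^*(V_\G\oplus\sO)\otimes\sO_{Y_\G}(1)\to T_{Y_\G/\G}\to 0,
\]
combined with $H^1(C,\sO_C)=0$, further reduces the task to verifying both $H^1(C,f^*\sO_{Y_\G}(1))=0$ and $H^1(C,f^*\pi^*V_\G\otimes f^*\sO_{Y_\G}(1))=0$.

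For the first vanishing, the key observation is that $Y_\infty=\Pp(V_\G)\subset Y_\G$ lies in the linear system $|\sO_{Y_\G}(1)|$: the inclusion $\sO\hookrightarrow V_\G\oplus\sO$ induces a canonical section of $\sO_{Y_\G}(1)$ whose zero locus is precisely $Y_\infty$. Since $(C,f)\in\scrU_\G$, the map $f$ sends no component of $C$ nonconstantly into $Y_\infty$, so the pullback of this section is nonzero on every non-contracted component; on a contracted component $f^*\sO_{Y_\G}(1)$ is trivial. Either way, $f^*\sO_{Y_\G}(1)$ has nonnegative degree on each irreducible component of the rational tree $C$, and a standard partial-normalization argument yields the vanishing. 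For the second, I would exploit global generation of $V_\G$: choose any surjection $\sO_\G^{\oplus N}\twoheadrightarrow V_\G$, pull back to $Y_\G$ and twist by $\sO_{Y_\G}(1)$ to obtain a surjection $\sO_{Y_\G}(1)^{\oplus N}\twoheadrightarrow\pi^*V_\G\otimes\sO_{Y_\G}(1)$. Pulling back by $f$ preserves surjectivity, and since any coherent sheaf on the one-dimensional curve $C$ has $H^2=0$, the resulting long exact sequence exhibits $H^1(C,f^*\pi^*V_\G\otimes f^*\sO_{Y_\G}(1))$ as a quotient of $H^1(C,f^*\sO_{Y_\G}(1))^{\oplus N}=0$.

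The main subtlety I anticipate is not in any single step but in the bookkeeping at the boundary of $\bM_{0,n}(Y_\G,\beta)$: one must confirm that the obstruction for stable-map deformations at a nodal $(C,f)$ is really controlled by $H^1(C,f^*T_{Y_\G})$ in the way indicated, and one must apply the ``nonnegative degree on each component of a rational tree implies $H^1=0$'' principle carefully, in both the line-bundle and the vector-bundle forms used above. Once this is in place, the proof is essentially a projective-bundle variant of the convexity-type vanishing used in \cite[Theorem 1.4]{L}.
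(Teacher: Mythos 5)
Your proof is correct, and its skeleton matches the paper's: both reduce to showing $H^1(C,f^*T_{Y_\G})=0$, both split $T_{Y_\G}$ via the relative tangent sequence (using convexity of $\G$) and then the relative Euler sequence, and both exploit the same two geometric inputs — global generation of $V_\G$ and the section of $\sO_{Y_\G}(1)$ cutting out $(Y_\G)_\infty$, together with the defining property of $\scrU_\G$. Where the two arguments diverge is in how the vanishing for the Euler-sequence middle term is extracted. The paper first passes to the normalization (tersely, ``one may assume $C=\Pp^1$'') and then shows that global sections of $(p_\G)^*(V_\G\oplus\sO)\otimes\sO(1)$ generate the fiber at a single point $f(x)\notin(Y_\G)_\infty$; this rules out negative summands of $f^*T_{p_\G}$ on each rational component, which is in fact the stronger fact one needs to run the node-smoothing induction left implicit there. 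You instead stay on the nodal curve throughout, show $f^*\sO_{Y_\G}(1)$ has nonnegative degree on every component (via the same section $s$) so that $H^1$ vanishes by the standard rational-tree lemma, and then bootstrap to $\pi^*V_\G\otimes\sO(1)$ via a surjection $\sO(1)^{\oplus N}\twoheadrightarrow\pi^*V_\G\otimes\sO(1)$ and $H^2=0$ on a curve. Your route is slightly more explicit about the nodal bookkeeping that the paper's ``pass to normalization'' hides, while the paper's ``generated at one point'' trick is a compact way to see all summands are nonnegative at once; both are fine. (Minor aside: the paper's displayed Euler sequence has a typo, $\oplus$ in place of $\otimes$; your version is the correct one. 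Also, the appeal to $H^1(C,\sO_C)=0$ in your reduction is harmless but not actually needed — what you use is $H^2(C,\sO_C)=0$.)
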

\begin{proof}
It suffices to prove that for any stable map $(C,x_1,\ldots,x_n,f)\in \scrU_\G$, $H^1(C,f^*T{Y_\G})=0$. By passing to normalization, one may assume $C=\Pp^1$. We have the following Euler sequence of $Y_\G$.
\[
0\rightarrow \sO \rightarrow (p_\G)^*(V\oplus \sO)\oplus \sO_{\Pp_\G(V\oplus\sO)}(1) \rightarrow T_{p_\G} \rightarrow 0,
\]
where $T_{p_\G}$ is the relative tangent bundle of $p_\G:Y_\G\rightarrow \G$ the projection. We also have the short exact sequence
\[
0\rightarrow T_{p_\G}\rightarrow T_{Y_\G} \rightarrow (p_\G)^*T_\G \rightarrow 0.
\]
Since they are vector bundles, this short exact sequence pulls back to
\[
0\rightarrow f^*T_{p_\G}\rightarrow f^*T_{Y_\G} \rightarrow f^*(p_\G)^*T_\G \rightarrow 0
\]
over $\Pp^1$. Now $f^*(p_\G)^*T_\G$ is already convex. If we can prove $f^*T_{p_\G}$ is also convex, then $f^*T_{Y_\G}$ will be convex as well. We will show that there is a point $x\in \Pp^1$ such that the global sections of $T_{p_\G}$ generates its fiber at $f(x)$. One point is enough because if a vector bundle over $\Pp^1$ has a negative factor, the global sections would not generate any fiber over $\Pp^1$.

We choose the point $x$ to be one that $f(x)\not\in (Y_\G)_\infty$ (by definition of $\scrU_\G$, such a $x$ exists). $f(x)$ could a priori be any point in $Y_\G\backslash (Y_\G)_\infty$. In the meantime, $T_{p_\G}$ is a quotient of $(p_\G)^*(V\oplus \sO)\oplus \sO_{\Pp_\G(V\oplus\sO)}(1)$. Therefore it suffices to show that global sections of $(p_\G)^*(V\oplus \sO)\oplus \sO_{\Pp_\G(V\oplus\sO)}(1)$ generates the fiber over any point in $Y_\G\backslash (Y_\G)_\infty$. This can be easily seen because 
\begin{enumerate}
\item $(p_\G)^*(V\oplus\sO)$ is globally generated;
\item $\sO_{\Pp_\G(V\oplus\sO)}(1)$ has a section $s$ whose vanishing locus is exactly $(Y_\G)_\infty$.
\end{enumerate}
One can multiply suitable sections with the section $s$ to generate the fiber of any given point in $Y_\G\backslash (Y_\G)_\infty$.
\end{proof}
Therefore we have $[\scrU]^{vir}=[\scrU]$. This is also why we choose this open substack. 

For the rest of this subsection, we assume $X=\G$ is a convex variety. Let $U=Y\backslash Y_0$. Since $U$ doesn't intersect with the blow-up center, it is also an open subset of $\tilde Y$.
\begin{lem}
The open substack $\scrU\bigcap\bM_{0,n}(U,\beta) \subset \scrU$ is dense.
\end{lem}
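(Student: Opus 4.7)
The strategy is to exploit the smoothness of $\scrU$ from Lemma \ref{lem:unobs} together with global generation in the fiber directions of $p_\G \colon Y \to \G$ to deform any given stable map off $Y_0$. Since $U \subset Y$ is open, $\scrU \cap \bM_{0,n}(U,\beta)$ is open in $\scrU$; as a smooth stack has disjoint irreducible components, density will follow once we show that every $(C,f) \in \scrU$ is a limit inside $\scrU$ of stable maps whose image is disjoint from $Y_0$, which I will obtain by integrating a suitable first-order deformation.

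The key technical input is that the relative tangent bundle $T_{p_\G}$ is globally generated on $Y$. This follows from the Euler sequence exhibiting $T_{p_\G}$ as a quotient of $p_\G^*(V_\G \oplus \sO) \otimes \sO_Y(1)$, both of whose factors are globally generated (the first by pullback from $\G$, the second because its pushforward is $V_\G \oplus \sO$). Note also that $T_{p_\G} \hookrightarrow T_Y$ is a subbundle and that $T_{p_\G}|_{Y_0} \cong N_{Y_0/Y} \cong V_\G$, since $Y_0 \hookrightarrow Y$ is a section of $p_\G$. Given $(C,f) \in \scrU$ with $f^{-1}(Y_0) \neq \emptyset$, one then produces $\xi \in H^0(Y, T_{p_\G}) \subset H^0(Y, T_Y)$ whose pullback $f^*\xi \in H^0(C, f^*T_Y)$ has nonzero $N_{Y_0/Y}$-component at each isolated point of $f^{-1}(Y_0)$ and is a nowhere-vanishing section of $f|_{C'}^* N_{Y_0/Y}$ on every irreducible component $C' \subseteq C$ mapped entirely into $Y_0$. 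For the latter, observe that $f|_{C'}^* V_\G$ is a globally generated vector bundle of rank $\geq 2$ on $\Pp^1$, whose generic section is nowhere vanishing; since $H^0(Y, T_{p_\G}) \to H^0(C', f|_{C'}^* T_{p_\G})$ surjects onto each stalk, a generic $\xi$ works.

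By smoothness of $\scrU$ at $(C,f)$ the section $f^*\xi$ integrates to an actual one-parameter family $(C,f_s) \subset \scrU$ whose first-order behavior pushes every point of $f^{-1}(Y_0)$ off $Y_0$. Because $Y_0$ has codimension $\mathrm{rank}\,V_\G \geq 2$ in $Y$, for small nonzero $s$ we have $f_s(C) \cap Y_0 = \emptyset$, placing $(C,f)$ in the closure of $\scrU \cap \bM_{0,n}(U,\beta)$ and proving density. The main technical obstacle lies in this last step: rigorously verifying that the deformation does not introduce new intersections with $Y_0$. The cleanest way to handle it is to work on the universal curve $\mathcal{C} \to \scrU$ with universal map $F \colon \mathcal{C} \to Y$, analyze $F^{-1}(Y_0)$ locally at each meeting point using the tangent direction provided by $f^*\xi$, and invoke the codimension bound to control the fibers of $F^{-1}(Y_0)$ over the deformation parameter.
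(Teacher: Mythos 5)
Your proof follows the same route as the paper: deform $(C,f)$ off $Y_0$ by integrating a vertical vector field coming from $T_{p_\G}\subset T_Y$. The paper's version is a bit more streamlined — it lifts a single section $s_V$ of $V$, nonvanishing as a section of $N_{Y_0/Y}=V$, to a global vector field on $Y$ via the Euler sequence, and uses that one vector field for every $(C,f)$. You instead invoke global generation of $T_{p_\G}$ and build a $\xi$ adapted to the given $(C,f)$. Both constructions are in the same spirit; yours is arguably more careful, since a section of $V$ that is nowhere vanishing on all of $X$ exists only when $\mathrm{rank}\,V>\dim X$ (and does not exist for $V_\G=Q\boxtimes\sO$), whereas the argument really only requires nonvanishing along the at-most-one-dimensional locus $f(C)\cap Y_0$, which is exactly what you arrange.

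The genuine gap is the step you yourself flag as the ``main technical obstacle,'' and the heuristic you offer does not close it. Knowing that $Y_0$ has codimension $\mathrm{rank}\,V_\G\geq 2$ and that $f^*\xi$ has nonzero normal component at $f^{-1}(Y_0)$ does \emph{not} imply $f_s(C)\cap Y_0=\emptyset$ for small nonzero $s$: a one-parameter family of curves in $Y$ can meet a codimension-$\geq 2$ subvariety for \emph{every} value of the parameter (e.g. a pencil of lines all passing through a fixed codimension-two linear subspace, moved by a constant vector field). One must use more of the structure. Two clean ways to finish: (1) exploit that $\xi\in H^0(Y,T_{p_\G})$ is vertical, so its flow $\phi_s$ preserves the fibers of $p_\G$; in each fiber $Y_0$ is a single point being pushed off itself, so the incidence locus $\{(c,s):\phi_s(f(c))\in Y_0\}$ can be analyzed fiber-by-fiber and, for a generic choice of $\xi$, shown to have finite image in the $s$-line; or (2) bypass the one-parameter family altogether: writing $\mathcal Z=\scrU\setminus(\scrU\cap\bM_{0,n}(U,\beta))$ as the image in $\scrU$ of $F^{-1}(Y_0)$ under the universal curve $\mathcal C\to\scrU$, use that $T_Y$ is globally generated along $Y_0$ (essentially the content of the proof of Lemma \ref{lem:unobs}) to conclude $F$ is transverse to $Y_0$, hence $\mathrm{codim}_{\mathcal C}F^{-1}(Y_0)=\mathrm{rank}\,V$ and $\mathrm{codim}_{\scrU}\mathcal Z\geq\mathrm{rank}\,V-1\geq 1$. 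Your final sentence gestures at route (2), but as written the proof stops short of an argument at precisely the point you identify.
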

\begin{proof}
Recall $V$ is globally generated. Since the rank of $V$ is larger than $1$, a general section is disjoint from the zero section. Choose any section $s_V:X\rightarrow V$ that is disjoint from the zero section. Also note that $N_{Y_0/Y}=V$. We claim that this section $s_V$ of $N_{Y_0/Y}$ lifts to the tangent space $TY$. It's easy to check that $N_{Y_0/Y}=T_p|_{Y_0}$ the restriction of relative tangent. By the Euler sequence of the relative tangent, we can check that \[(s_V\oplus 1)\otimes s\in \Gamma(p^*(V\oplus \sO)\otimes \sO_{\Pp_X(V\oplus \sO)}(1))\] is a lifting of $s_V$, where $s$ is the section of $\sO_{\Pp_X(V\oplus \sO)}(1)$ that vanishes along $Y_\infty$.

Now that $\scrU$ is unobstructed, for any point $(C,x_1,\ldots,x_n,f)\in \scrU$, the pull-back of this tangent vector field along $f$ is integrable and there is a one-parameter family whose generic member is in $\scrU\bigcap\bM_{0,n}(U,\beta)$.
\end{proof}

Let $d=dim\scrU=dim(\bM_{0,n}(Y,\beta))$ and $\mathcal Z=\scrU \backslash \scrU\bigcap \bM_{0,n}(U,\beta)$. The previous lemma implies \[dim(\mathcal Z) < d.\] We have the exact sequence
\[
A^{\C^*}_{d}(\mathcal Z) \rightarrow A^{\C^*}_{d}(\scrU) \rightarrow A^{\C^*}_{d}(\scrU\bigcap \bM_{0,n}(U,\beta)) \rightarrow 0.
\]
Recall that $A^{\C^*}_*(-)$ is bounded above by the dimension, even if it is an equivariant Chow group. Therefore the first term $A^{\C^*}_{d}(\mathcal Z)=0$ for dimensional reason. 

On the other hand, because of the open immersion $U\subset \tilde Y$, $\bM_{0,n}(U,\beta)$ has an open immersion into $\bM_{0,n}(\tilde Y,\pi^!\beta)$ (may not be dense). We have a similar exact sequence for $A^{\C^*}_{d}(\tilde\scrU)$, except that the first term may not vanish (we don't know the dimension of $\tau^{-1}\mathcal Z$). Because the blow-up $\pi$ restricts to isomorphism on the open subset $U\subset Y$, we have 
\[
\tilde\scrU\bigcap \bM_{0,n}(U,\beta) \cong \scrU\bigcap \bM_{0,n}(U,\beta),
\]
where we slightly abuse notations by regarding $U$ as a subset of $\tilde Y$ on the
left hand side and regarding $U$ as a subset of $Y$ on the right hand side. To put the two exact sequences together, we have
\[
\xymatrix{
&A^{\C^*}_{d}(\tilde\scrU) \ar[r] \ar[d]^{\tau_*} & A^{\C^*}_{d}(\tilde\scrU\bigcap \bM_{0,n}(U,\beta)) \ar[r] \ar[d]^{\cong} & 0 \\
0 \ar[r] & A^{\C^*}_{d}(\scrU) \ar[r]^-{\cong} & A^{\C^*}_{d}(\scrU\bigcap \bM_{0,n}(U,\beta)) \ar[r] & 0 
}
\]
Recall we want to show that $\tau_*[\tilde\scrU]^{vir,eq}=[\scrU]^{vir,eq}$ (via the left
column). If we let the class $[\tilde\scrU]^{vir,eq}$ go through the arrow
to the right, we notice that $[\tilde\scrU]^{vir,eq}$ is sent to
$[\tilde\scrU\bigcap \bM_{0,n}(U,\beta)]^{vir,eq}$ because virtual class is
pulled back to virtual class via open immersions. Then we proceed by going through the right
vertical arrow and the inverse of the bottom horizontal arrow. Since they are
isomorphisms, we easily conclude that Lemma \ref{lem:convex} is true.

\subsection{Virtual pull-back}\label{sec:virpull}
We are under the setting of Lemma \ref{lem:embed}.  There is the following diagram.
\begin{equation}\label{eqn:diag}
\xymatrix{
\bM_{0,n}(\tilde Y,\pi^!\beta) \ar[r]^{\tilde\iota} \ar[d]^{\tau} & \bM_{0,n}(\tilde Y_\G,\pi^!\beta) \ar[d]^{\tau_\G} \\
\bM_{0,n}(Y,\beta) \ar[r]^{\iota} & \bM_{0,n}(Y_\G,\beta). \\
}
\end{equation}

It is Cartesian by \cite[Remark 5.7]{M}. By base-change to an open substack, one has the following Cartesian diagram
\begin{equation}\label{eqn:diagU}
\xymatrix{
\tilde\scrU \ar[r]^{\tilde\iota} \ar[d]^{\tau} & \tilde\scrU_\G \ar[d]^{\tau_\G} \\
\scrU \ar[r]^{\iota} & \scrU_\G, \\
}
\end{equation}
where we slightly abuse notations by using the same $\iota, \tilde\iota, \tau, \tau_\G$ on open substacks.

In \cite{M}, virtual pull-backs are constructed (in our case, \cite[Construction 3.13]{M} applies). Note that for a torus-equivariant DM-type morphism, if there is an equivariant perfect relative obstruction theory, then the construction in \cite{M} can be done in the equivariant Chow in parallel. In our case, we denote the following virtual pull-backs under equivariant context.
\[
\iota^!:A_*^{\C^*}(\scrU_\G) \rightarrow A_{*-\delta}^{\C^*}(\scrU),
\]
\[
\tilde \iota^!:A_*^{\C^*}(\tilde\scrU_\G) \rightarrow A_{*-\delta'}^{\C^*}(\tilde\scrU),
\]
where $\delta,\delta'$ are the differences of the corresponding virtual dimensions. Note that they are defined because the relative obstruction theory of $\iota$ is perfect over $\scrU$, which further follows from Lemma \ref{lem:unobs} under a similar argument to \cite[Remark 3.15]{M}. By \cite[Corollary 4.9]{M}, these morphisms send virtual classes to virtual classes.

\begin{lem}
Lemma \ref{lem:convex} implies Theorem \ref{thm:vircls}.
\end{lem}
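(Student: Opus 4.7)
The plan is to combine Lemma~\ref{lem:convex} with the functoriality of virtual pull-back in the Cartesian diagram \eqref{eqn:diagU}. The three ingredients I expect to use are: (i) the virtual pull-backs $\iota^!$ and $\tilde\iota^!$ send virtual classes to virtual classes, so that $[\scrU]^{vir,eq} = \iota^![\scrU_\G]^{vir,eq}$ and $[\tilde\scrU]^{vir,eq} = \tilde\iota^![\tilde\scrU_\G]^{vir,eq}$; (ii) Lemma~\ref{lem:convex} applied on the big moduli gives $\tau_{\G *}[\tilde\scrU_\G]^{vir,eq} = [\scrU_\G]^{vir,eq}$; and (iii) the compatibility $\tau_*\circ\tilde\iota^! = \iota^!\circ\tau_{\G *}$, which is the equivariant analogue of the base-change formula for virtual pull-backs along a proper morphism (the statement appearing in \cite{M} and extending to the $\C^*$-equivariant Chow groups because all obstruction theories, morphisms, and the torus action are compatible with the base-change in \eqref{eqn:diagU}).

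Granting (iii), the chain of equalities is short:
\[
\tau_*[\tilde\scrU]^{vir,eq}
= \tau_*\tilde\iota^![\tilde\scrU_\G]^{vir,eq}
= \iota^!\tau_{\G *}[\tilde\scrU_\G]^{vir,eq}
= \iota^![\scrU_\G]^{vir,eq}
= [\scrU]^{vir,eq}.
\]
So once the base-change formula is in hand, the lemma follows.

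The main obstacle is verifying (iii) in the equivariant setting, i.e.\ that the relative perfect obstruction theory for $\tilde\iota$ is the pull-back of the one for $\iota$ along the proper morphism $\tau_\G$, and that this pull-back compatibility is $\C^*$-equivariant. Concretely, I would check that the obstruction theory for $\iota$ constructed as in \cite[Remark~3.15]{M} (valid because $\scrU_\G$ is smooth by Lemma~\ref{lem:unobs}, and $\iota$ is a DM-type embedding cut out inside $\scrU_\G$ by the same equations used to cut $Y$ out of $Y_\G$) pulls back under $\tau_\G$ to a relative obstruction theory for $\tilde\iota$ agreeing with the natural one on the stable map side. This is essentially the content of \cite[Theorem~4.1]{M} applied to \eqref{eqn:diagU}, and since every ingredient, i.e.\ the morphisms, the obstruction theories, and the embeddings on the moduli level, is naturally $\C^*$-equivariant, Manolache's proof transports verbatim to the $\C^*$-equivariant Chow groups.

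Once (iii) is justified, the four-step calculation above concludes the proof, and in particular shows that Theorem~\ref{thm:vircls} is reduced to the convex case covered by Lemma~\ref{lem:convex}.
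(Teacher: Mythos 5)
Your proposal follows the paper's own argument essentially verbatim: both use the Cartesian diagram \eqref{eqn:diagU}, the facts that the virtual pull-backs $\iota^!,\tilde\iota^!$ send virtual classes to virtual classes, the compatibility $\tau_*\circ\tilde\iota^! = \iota^!\circ(\tau_\G)_*$ from \cite{M}, and Lemma~\ref{lem:convex}, assembled into the same four-step chain of equalities. You supply a bit more detail than the paper on why the obstruction-theory compatibility transports to the equivariant setting, but the route is the same.
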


It follows from a completely parallel argument to \cite[Proposition 5.14]{M}. Briefly speaking, one first check that the diagram \eqref{eqn:diag} is Cartesian. And then, 
\begin{equation}
\tau_*[\tilde\scrU]^{vir,eq} = \tau_*\tilde\iota^! [\tilde\scrU_\G]^{vir,eq} = \iota^! (\tau_\G)_* [\tilde\scrU_\G]^{vir,eq} = \iota^! [\scrU_\G] = [\scrU]^{vir,eq}.
\end{equation}
For the reason of each equality, one is also referred to the proof of \cite[Proposition 5.14]{M}.

\section{Consequences via localization}\label{sec:consq}

We would like to briefly recall the virtual localization in \cite{GP} and make the set-up in the next subsection.

Let $\mathscr X$ be a smooth projective variety admitting an action by a torus
$T=(\C^*)^m$. It induces an action of $T$ on $\bM_{g,n}(\mathscr X,\beta)$. Let
$\bM_\alpha$ be the connected components of the fixed locus $\bM_{g,n}(\mathscr
X,\beta)^T$ labeled by $\alpha$ with the inclusion $i_\alpha:
\bM_\alpha\rightarrow \bM_{g,n}(\mathscr X,\beta)$. The virtual fundamental
class $[\bM_{g,n}(\mathscr X,\beta)]^{vir}$ can be written as
\[
[\bM_{g,n}(\mathscr X,\beta)]^{vir}=\sum_\alpha (i_\alpha)_*  \displaystyle\frac{[\bM_\alpha]^{vir}}{e_T(N^{vir}_\alpha)},
\]
where $[\bM_\alpha]^{vir}$ is constructed from the fixed part of the restriction
of the perfect obstruction theory of $\bM_{g,n}(\mathscr X,\beta)$, the virtual
normal bundle $N^{vir}_\alpha$ is the moving part of the two term complex in the
perfect obstruction theory of $\bM_{g,n}(\mathscr X,\beta)$, and $e_T$ stands
for the equivariant Euler class. Sometimes we call
$\displaystyle\frac{[\bM_\alpha]^{vir}}{e_T(N^{vir}_\alpha)}$ the localization
residue of $\alpha$. In Gromov--Witten theory, one ingredient of these localization residues is the twisted theory.

\subsection{Twisted Gromov--Witten theory}\label{sec:twistedgw}
Let $X$ be a smooth projective variety, $E$ be a vector bundle over $X$. Let 
\[
ft_{n+1}:\bM_{g,n+1}(X,\beta) \rightarrow \bM_{g,n}(X,\beta)
\]
be the map forgetting the last marked point. Under this map $\bM_{g,n+1}(X,\beta)$ can be viewed as the universal family over $\bM_{g,n}(X,\beta)$. Furthermore, the evaluation map of the last marked point
\[
ev_{n+1}:\bM_{g,n+1}(X,\beta) \rightarrow X
\]
serves as the universal stable map from the universal family over $\bM_{g,n}(X,\beta)$.

Let $\C^*$ act on $X$ trivially and on $E$ by scaling (weight $1$ on every $1$-dimensional linear subspace of a fiber). Denote $\lambda$ the corresponding equivariant parameter. Let $E_{g,n,\beta}=[R(ft_{n+1})_*ev_{n+1}^*E]\in K^0_{\C^*}(\bM_{g,n}(X,\beta))$. This class can be represented by the difference of two vector bundles $E_{g,n,\beta}^0-E_{g,n,\beta}^1$ such that $\C^*$ acts on each of them by scaling as well.

Define the twisted Gromov--Witten invariants to be
\begin{equation}\label{eqn:twistedgw}
\langle \psi^{k_1}\alpha_1,\ldots{},\psi^{k_n}\alpha_n \rangle_{g,n,\beta}^{X,E}=\displaystyle\int_{[\bM_{0,n}(X,\beta)]^{vir}} \frac{1}{e_{\C^*}(E_{g,n,\beta})} \cup \prod\limits_{i=1}^n \psi_i^{k_i}ev_{i}^*\alpha_i \in \C[\lambda,\lambda^{-1}] ,
\end{equation}
where $\dfrac{1}{e_{\C^*}(E_{g,n,\beta})}=\dfrac{e_{\C^*}(E_{g,n,\beta}^1)}{e_{\C^*}(E_{g,n,\beta}^0)}$.
\begin{rmk}
For a more detailed discussion about a more general set-up, one can see for example \cite{CG}.
\end{rmk}

\subsection{Theorem \ref{thm:main0} and other consequences}\label{section:thmmain0}

Back to our case, we can apply virtual localizations to $\tilde Y$ and $Y$
respectively. In each case we will index fixed locus by suitable bipartite
graphs. For the assignment of decorated graphs to invariant stable maps, one can
refer to for example \cites{Liu, FL}, among others. Given
$\tilde \Gamma$ a decorated graph for an invariant stable map
$(C,x_1,\dotsc,x_n,f)$ to $\tilde Y$, one can assign a decorated graph to the
image of $(C,x_1,\dotsc,x_n,f)$ under $\tau$ (still invariant because $\tau$ is
equivariant). By a slight abuse of notation, we denote the resulting graph by
$\tau(\tilde \Gamma)$.

One thing to be careful about is that we have chosen to work on some open
substacks. Since $\tau:\bM_{0,n}(\tilde Y,\pi^!\beta) \rightarrow
\bM_{0,n}(Y,\beta)$ is proper, its base-change $\tau:\tilde\scrU\rightarrow
\scrU$ is also proper. Therefore its push-forward on Chow groups are defined.
Localization formula alone may not produce meaningful computational result,
because $\tilde\scrU$ and $\scrU$ are not proper. We need the following
``correspondence of residues".

\begin{prop}\label{prop:res}
Fixing a connected component $\bM_\Gamma$ of the fixed locus in $\scrU$, Theorem
\ref{thm:vircls} implies the following equality in $A_*^{\C^*}(\bM_{\Gamma})$.
\[
\tau_* \left(\sum_{\tau(\tilde \Gamma)=\Gamma} \displaystyle\frac{[\bM_{\tilde\Gamma}]^{vir}}{e_{\C^*}(N^{vir}_{\tilde\Gamma})} \right) =  \displaystyle\frac{[\bM_{\Gamma}]^{vir}}{e_{\C^*}(N^{vir}_{\Gamma})},
\]
where we index fixed components of $\bM_{0,n}(\tilde Y,\beta)$ and $\bM_{0,n}(Y,\beta)$ by $\bM_{\tilde\Gamma}$ and $\bM_{\Gamma}$ respectively (with $\tilde\Gamma$, $\Gamma$ corresponding decorated graphs). $N_{\tilde\Gamma}^{vir}$ and $N_{\Gamma}^{vir}$ are the corresponding virtual normal bundles as well.
\end{prop}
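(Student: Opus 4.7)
The plan is to deduce the component-wise residue identity from the global equality $\tau_*[\tilde\scrU]^{vir,eq}=[\scrU]^{vir,eq}$ of Theorem \ref{thm:vircls} by inverting $\lambda$ and applying virtual localization on both sides.

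First, I would apply the virtual localization formula of \cite{GP} (in its equivariant Chow form, on the non-proper open substacks $\scrU$ and $\tilde\scrU$, which is legitimate since the fixed components $\bM_\Gamma$, $\bM_{\tilde\Gamma}$ are proper) to write, after inverting $\lambda$,
\[
[\scrU]^{vir,eq}=\sum_{\Gamma}(i_\Gamma)_*\frac{[\bM_\Gamma]^{vir}}{e_{\C^*}(N_\Gamma^{vir})},\qquad [\tilde\scrU]^{vir,eq}=\sum_{\tilde\Gamma}(i_{\tilde\Gamma})_*\frac{[\bM_{\tilde\Gamma}]^{vir}}{e_{\C^*}(N_{\tilde\Gamma}^{vir})}.
\]
Since $\tau$ is $\C^*$-equivariant, it sends $\C^*$-fixed points to $\C^*$-fixed points, so for any decorated graph $\tilde\Gamma$ the image $\tau(\bM_{\tilde\Gamma})$ is contained in the fixed component indexed by $\tau(\tilde\Gamma)$, and the resulting map $\bM_{\tilde\Gamma}\to \bM_{\tau(\tilde\Gamma)}$ is proper.

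Next I would push the second identity forward by $\tau_*$, combine with Theorem \ref{thm:vircls}, and rearrange according to the partition of the index set by the map $\tilde\Gamma\mapsto\tau(\tilde\Gamma)$:
\[
\sum_{\Gamma}(i_\Gamma)_*\!\!\sum_{\tau(\tilde\Gamma)=\Gamma}\tau_*\!\left(\frac{[\bM_{\tilde\Gamma}]^{vir}}{e_{\C^*}(N_{\tilde\Gamma}^{vir})}\right)=\sum_\Gamma (i_\Gamma)_*\frac{[\bM_\Gamma]^{vir}}{e_{\C^*}(N_\Gamma^{vir})}.
\]

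To extract the component-wise statement, I would fix a graph $\Gamma$ and apply $i_\Gamma^*$ to both sides in localized equivariant Chow. Since distinct fixed components are disjoint, the cross terms $i_\Gamma^*(i_{\Gamma'})_*$ vanish for $\Gamma'\neq\Gamma$, while $i_\Gamma^*(i_\Gamma)_*$ is multiplication by $e_{\C^*}(N_{\bM_\Gamma/\scrU})$, which is invertible after inverting $\lambda$ (it coincides with the Euler class of the moving part of the tangent complex on $\bM_\Gamma$, i.e. is the non-virtual version of $e_{\C^*}(N_\Gamma^{vir})$ shifted by the obstruction moving part; in any case one divides by the same invertible class on both sides and the identity cancels). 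The resulting equality on $\bM_\Gamma$ is precisely the claimed residue identity.

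The only subtle step is the last one: one must check that the localization decomposition along fixed components of $\scrU$ is injective after inverting $\lambda$, despite $\scrU$ being non-proper. This follows from the same argument as in \cite{GP}, since the self-intersection formula used only requires properness of each $i_\Gamma$, not of $\scrU$ itself, and all the relevant Euler classes become invertible in $A_*^{\C^*}(\bM_\Gamma)\otimes_{\Q[\lambda]}\Q[\lambda,\lambda^{-1}]$. Once this is in place, the grouping by $\tau(\tilde\Gamma)=\Gamma$ from Theorem \ref{thm:vircls} immediately yields the stated equality.
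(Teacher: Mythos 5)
Your approach is essentially the one the paper takes in Appendix~\ref{app:res}: invert $\lambda$, use that the equivariant proper map $\tau$ sends fixed loci to fixed loci, and match localization residues component by component against the global identity $\tau_*[\tilde\scrU]^{vir,eq}=[\scrU]^{vir,eq}$. The one place where you diverge is in how the component-wise statement is extracted. The paper invokes the abstract decomposition $A_*^{\C^*}(\scrU)\otimes_{\Q}\Q[\lambda,\lambda^{-1}]\cong\bigoplus_\Gamma A_*^{\C^*}(\bM_\Gamma)\otimes_{\Q[\lambda]}\Q[\lambda,\lambda^{-1}]$ of the localized equivariant Chow group along fixed components, cited to [Kr, Theorem 5.3.5], identifies the $\bM_\Gamma$-summand of $[\scrU]^{vir,eq}$ with the localization residue using [GP], and then observes that $\tau_*$ respects the direct-sum decomposition, so the identity can be read off summand by summand. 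Your alternative — applying $i_\Gamma^*$, using disjointness to kill cross terms, and dividing by $e_{\C^*}(N_{\bM_\Gamma/\scrU})$ — is trying to re-prove the injectivity of that decomposition by hand. Be careful here: the self-intersection identity $i_\Gamma^*(i_\Gamma)_*=e_{\C^*}(N_{\bM_\Gamma/\scrU})\cdot(-)$ requires $\bM_\Gamma\hookrightarrow\scrU$ to be a regular embedding, and $\scrU$ is in general not smooth (Lemma~\ref{lem:unobs} only establishes that $\scrU_\G$, in the convex case, is unobstructed), so $N_{\bM_\Gamma/\scrU}$ as a vector bundle and the excess-intersection step are not immediate. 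The parenthetical remark in your sketch (``the non-virtual version of $e_{\C^*}(N_\Gamma^{vir})$ shifted by the obstruction moving part'') hints at the fix, but the clean route, and the one the paper actually takes, is to quote the abstract decomposition theorem instead of arguing through self-intersection on a possibly singular total space. With that substitution, your argument coincides with the paper's.
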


Note that this proposition yields relations between numerical invariants. It is
because in our case, $\bM_\Gamma$ and $\bM_{\tilde \Gamma}$ are all proper. A quick proof of the ``correspondence of residues'' is included in
Appendix \ref{app:res}.

Note that different choices of $\Gamma$ yield different equalities according to
the above corollary. We will show that suitable choices of $\Gamma$ give rise to
neat relations between Gromov--Witten invariants of $\Pp_X(V)$ and (twisted)
Gromov--Witten invariants of $X$. In the rest of this section, we will only show
what choices of $\Gamma$ lead to what relations. Some details of the
computations of $e_{\C^*}(N^{vir}_{\Gamma})$ can be found in Appendix \ref{app:comp}

Theorem \ref{thm:main0} can be deduced by the following choice of graph. Let $\Gamma$ be the graph consisting of a single vertex over $Y_0$ of class $\beta$ with $n$ markings without any edge. Then we have

\begin{equation}\label{eqn:rel1}
\langle \pi^*\sigma_1,\ldots, \pi^*\sigma_n \rangle_{0,n,\pi^!\beta}^{\Pp_X(V), \sO(-1)}=\langle \sigma_1,\ldots, \sigma_n \rangle_{0,n,\beta}^{X, V}.
\end{equation}
This is exactly Theorem \ref{thm:main0}.

One can choose other graphs to obtain other types of relations. We will list a few examples. Let $f\in N_1(\Pp(V))$ be the class of a line in a fiber. Let $\Gamma$ be the graph consisting of a vertex over $Y_0$ of class $\beta$, a vertex over $Y_\infty$ of degree $0$ and an edge of class $f$ connecting them. Put $n$ markings on the vertex over $Y_0$ and $1$ on the one over $Y_\infty$. Then we have
\begin{cor}
\begin{align}\label{eqn:rel2}
\begin{split}
& \langle \dfrac{h^e\pi^*\alpha}{(h-\lambda)(\lambda-h-\psi)}, \pi^*\sigma_1,\ldots, \pi^*\sigma_n \rangle_{0,n+1,\pi^!\beta+f}^{\Pp_X(V), \sO(-1)} \\
=& \langle \pi_*\left(\dfrac{h^e\pi^*\alpha}{(h-\lambda)(\lambda-h-\psi)}\right), \sigma_1,\ldots, \sigma_n \rangle_{0,n+1,\beta}^{X, V},
\end{split}
\end{align}
where $h=c_1(\sO_{\Pp(V)}(1))$.
\end{cor}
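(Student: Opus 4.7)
My plan is to apply Proposition~\ref{prop:res} to the specified graph $\Gamma$ and interpret the two sides as the claimed twisted invariants.

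I would first describe the fixed locus $\bM_\Gamma$. A $\C^*$-invariant stable map of type $\Gamma$ is determined by a stable map $f_0\colon C_0\to X=Y_0$ of class $\beta$ carrying the $n$ ordinary markings and a distinguished half-edge point $q$, the unique fiber line in $Y$ from $f_0(q)\in Y_0$ to a chosen point $r\in\Pp(V_{f_0(q)})\subset Y_\infty$, and the remaining marking placed at $r$. Hence $\bM_\Gamma\cong\bM_{0,n+1}(X,\beta)\times_X\Pp(V)$, where the second factor records $r$ and is paired with the evaluation at $q$ via $\pi\colon\Pp(V)\to X$. On the $\tilde Y$ side, the $\tilde\Gamma$ producing the class $\pi^!\beta+f$ has $v_0$ over $\tilde Y_0=\Pp(V)$ of that class, edge $\tilde f$ (a fiber of $\tilde Y\to\Pp(V)$), and a contracted $v_\infty$ over $\tilde Y_\infty$; its moduli is $\bM_{\tilde\Gamma}\cong\bM_{0,n+1}(\Pp(V),\pi^!\beta+f)$.

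Next, I would compute each $e_{\C^*}(N^{vir})$. On the $Y$ side, the $v_0$ contribution comes from $N_{Y_0/Y}=V$ with $\C^*$-weight $\lambda$ and yields the inverse of the $V$-twist in the twisted theory. On the $\tilde Y$ side, the analogous contribution from $N_{\tilde Y_0/\tilde Y}=\sO(-1)$ (weight $\lambda$) yields the inverse of the $\sO(-1)$-twist on $\bM_{0,n+1}(\Pp(V),\pi^!\beta+f)$. The remaining edge/node factors are computed from the $\Pp^1$-bundle structure $\tilde Y=\Pp_{\Pp(V)}(\sO(-1)\oplus\sO)$: the tangent to a $\tilde f$-fiber at its $\tilde Y_0$-end carries equivariant first Chern class $\lambda-h$, and at its $\tilde Y_\infty$-end it is $h-\lambda$, where $h=c_1(\sO_{\Pp(V)}(1))$. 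The two node smoothings, combined with the cotangent class $\psi$ at the distinguished marking on the $v_0$ side, then produce the factor $(h-\lambda)(\lambda-h-\psi)$ in the denominator of the residue.

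Finally, I would integrate the equality of Proposition~\ref{prop:res} against $\prod_{i=1}^n ev_i^*\pi^*\sigma_i$ together with $h^e\pi^*\alpha$ at the distinguished marking. On the $\tilde\Gamma$ side this produces the left-hand twisted invariant directly. On the $\Gamma$ side, since $\bM_\Gamma$ is the pullback $\Pp(V)$-bundle over $\bM_{0,n+1}(X,\beta)$, the projection formula along the $\Pp(V)$-factor replaces the insertion by $\pi_*\bigl(h^e\pi^*\alpha/((h-\lambda)(\lambda-h-\psi))\bigr)$, which is the right-hand side.

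The main obstacle is the careful bookkeeping of the $\C^*$-weights in the edge and smoothing contributions, in particular the identifications $\lambda-h$ and $h-\lambda$ for the two tangent lines to $\tilde f$ at its endpoints; this uses the Euler sequence of $\tilde Y\to\Pp(V)$ restricted to a fiber. Once that geometry is in place, matching the residues with the twisted Gromov--Witten invariants appearing in the corollary is essentially routine.
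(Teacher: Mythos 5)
Your proposal follows the same route as the paper's Appendix B.2: apply Proposition~\ref{prop:res} to the graph $\Gamma$, identify $\bM_\Gamma\cong\bM_{0,n+1}(X,\beta)\times_X\Pp(V)$ and $\bM_{\tilde\Gamma}\cong\bM_{0,n+1}(\Pp(V),\pi^!\beta+f)$, read off the $V$- and $\sO(-1)$-twists from the normal bundles of $Y_0$ and $\tilde Y_0$, obtain the denominator $(h-\lambda)(\lambda-h-\psi)$ from the edge and node contributions, and finish with the projection formula along the $\Pp(V)$-factor to produce the $\pi_*$ insertion. The identifications and weight computations you give (in particular $\lambda-h$ at the $\tilde Y_0$-end and $h-\lambda$ at the $\tilde Y_\infty$-end of the edge) agree with the paper, so the argument is correct and essentially identical.
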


One can similarly consider the case when $\Gamma$ consists of a vertex over $Y_0$ and $k$ edges, each of class $f$, coming out of this vertex. For each edge one attach a vertex at the other end over $Y_\infty$ of degree $0$ with $1$ marking. One gets
\begin{cor}
\begin{align}\label{eqn:rel3}
\begin{split}
& \langle \dfrac{h^e\pi^*\alpha_1}{(h-\lambda)(\lambda-h-\psi)}, \ldots, \dfrac{h^e\pi^*\alpha_m}{(h-\lambda)(\lambda-h-\psi)},\pi^*\sigma_1,\ldots, \pi^*\sigma_n \rangle_{0,n+m,\pi^!\beta+kf}^{\Pp_X(V), \sO(-1)} \\
=& \langle \pi_*\left(\dfrac{h^e\pi^*\alpha_1}{(h-\lambda)(\lambda-h-\psi)}\right), \ldots, \pi_*\left(\dfrac{h^e\pi^*\alpha_m}{(h-\lambda)(\lambda-h-\psi)}\right), \sigma_1,\ldots, \sigma_n \rangle_{0,n+m,\beta}^{X, V}.
\end{split}
\end{align}
\end{cor}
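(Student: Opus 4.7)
The plan is to apply Proposition \ref{prop:res} to the decorated graph $\Gamma$ described in the statement and to run, with $k$ edges instead of one, the same localization analysis that yielded \eqref{eqn:rel2}.

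First I would check the preimage structure under $\tau$: the central vertex admits a unique lift to $\tilde Y_0$ of class $\pi^!\beta$ (singled out by the condition $(\pi^!\beta, c_1(\sO(1)))=0$), and each edge of class $f$ on $Y$ lifts uniquely to the class of a full fiber of $\tilde Y\to \Pp(V)$. So the sum over $\tilde\Gamma$ with $\tau(\tilde\Gamma)=\Gamma$ in Proposition \ref{prop:res} reduces to a single term. Using $Y_0\cong X$, I would identify the fixed locus $\bM_\Gamma$ with an open substack of $\bM_{0,n+k}(X,\beta)$ equipped at each of the $k$ additional marked points with attaching data valued in $Y_\infty\cong \Pp(V)$, and similarly for $\bM_{\tilde\Gamma}$ on the $\tilde Y$ side.

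The key observation is factorization of the equivariant virtual normal bundle. Because the $k$ bubbles are disjoint from one another and from the central vertex except at distinct nodes, both $e_{\C^*}(N^{vir}_\Gamma)$ and $e_{\C^*}(N^{vir}_{\tilde\Gamma})$ decompose as a product of one factor coming from the central vertex and one factor coming from each edge-leaf pair. Each edge-leaf factor is exactly the one computed in the $k=1$ case used to derive \eqref{eqn:rel2}; after integrating over the bubble and leaf vertex and pushing forward via $\pi$, it contributes the insertion
\[
\pi_*\!\left(\dfrac{h^e\pi^*\alpha_i}{(h-\lambda)(\lambda-h-\psi)}\right)
\]
at the $i$-th additional marked point on the $X$ side, matching the corresponding factor $\dfrac{h^e\pi^*\alpha_i}{(h-\lambda)(\lambda-h-\psi)}$ that appears on the $\Pp(V)$ side. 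Reassembling the factorized residues through the central-vertex computation of Theorem \ref{thm:main0} then recovers the stated identity.

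The main obstacle is essentially bookkeeping: keeping the two occurrences of $h$ straight (the class on $\Pp(V)$ as it enters the $X$-side insertion via $\pi_*$ versus its role as the equivariant weight on the bubble), tracking the combinatorial automorphism factors coming from possible permutations of the $k$ bubbles (which are absorbed by the labels on the leaf markings), and verifying that the single-edge residue analysis really does apply in parallel at all $k$ nodes. Once these points are checked, the proof is a literal repetition of the $k=1$ computation carried out simultaneously at each edge.
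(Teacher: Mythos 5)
Your overall strategy matches the paper's: the paper itself disposes of \eqref{eqn:rel3} in Appendix~\ref{app:comp} by saying ``the reasonings are similar'' to the derivation of \eqref{eqn:rel2}, and your proposal correctly spells out what ``similar'' means — factorize $e_{\C^*}(N^{vir}_\Gamma)$ and $e_{\C^*}(N^{vir}_{\tilde\Gamma})$ over the central vertex and the $k$ edge--leaf pairs, reuse the single-edge residue computation at each node, and feed the result into the correspondence of residues (Proposition~\ref{prop:res}). The observation that the marked-point labels on the leaf vertices kill any edge-permutation automorphism is also right.

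One detail is off, though, and it is worth flagging because it propagates into how you identify $\bM_{\tilde\Gamma}$. You assert that the central vertex lifts to $\tilde Y_0\cong\Pp(V)$ with class $\pi^!\beta$, ``singled out by $(\pi^!\beta,c_1(\sO(1)))=0$.'' That is the class appearing in the \emph{edgeless} case \eqref{eqn:rel1}. In the $k=1$ computation of Appendix~\ref{app:comp} the paper explicitly states that the central vertex $\tilde v_1$ has class $\pi^!\beta+f$ and that $\bM_{\tilde\Gamma}\cong\bM_{0,n+1}(\Pp(V),\pi^!\beta+f)$; pairing with $c_1(\sO(1))$ gives $1$, not $0$. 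In the present $k$-edge case the central vertex class is $\pi^!\beta+kf$, which is exactly the degree label in \eqref{eqn:rel3} — the edges, being fibers of $\tilde Y\to\Pp(V)$, contribute $0$ to the $\Pp(V)$-degree, so that label records only the central component. Your version underestimates this class by $kf$ and would leave you unable to identify the fixed locus with $\bM_{0,n+m}(\Pp(V),\pi^!\beta+kf)$, which is the space over which the left-hand side of \eqref{eqn:rel3} is integrated. Since the factorization of the virtual normal bundle and the pushforward by $\pi$ on each leaf factor do not use the numerical value of this class, your argument still reaches the stated identity once the class is corrected; but the uniqueness-of-lift step and the resulting identification of $\bM_{\tilde\Gamma}$ should be stated with $\pi^!\beta+kf$ rather than $\pi^!\beta$.
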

One can also consider the $\Gamma$ consisting of a vertex over $Y_0$, an edge of class $kf$, and a vertex of degree $0$ at $Y_\infty$ end with $1$ marking. In this case, we get
\begin{cor}
\begin{align}\label{eqn:rel4}
\begin{split}
&\langle \dfrac{h^e\pi^*\alpha}{\left( \dfrac{(\lambda-h)}{k}-\psi \right)} \dfrac{1}{\prod\limits_{m=1}^{k} \left( \dfrac{m}{k}(h-\lambda) \right) \prod\limits_{m=1}^{k-1} \left(\dfrac{m}{k}(\lambda-h) \right) } ,
\pi^*\sigma_1,\ldots, \pi^*\sigma_n \rangle_{0,n+1,\pi^!\beta+kf}^{\Pp_X(V), \sO(-1)} \\
=&\langle \pi_*\left(\dfrac{h^e\pi^*\alpha }{\left(\dfrac{(\lambda-h)}{k}-\psi \right) \prod\limits_{m=1}^k \left( \dfrac{m}{k}(h-\lambda)\right) \prod\limits_{m=1}^{k-1}\left(c_V(h+\dfrac{m}{k}(\lambda-h))\right)  }\right), 
\sigma_1,\ldots, \sigma_n \rangle_{0,n+1,\beta}^{X, V}.
\end{split}
\end{align}
\end{cor}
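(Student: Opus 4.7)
The plan is to apply Proposition \ref{prop:res} to the decorated graph $\Gamma$ just described: one vertex $v_0$ over $Y_0$ of class $\beta$ carrying the $n$ markings $\sigma_1,\dotsc,\sigma_n$, one edge $e$ realising a $k$-fold cover of a $\Pp^1$-fiber of $Y\to X$, and one degree-zero vertex $v_\infty$ over $Y_\infty$ carrying the remaining marking at which $h^e\pi^*\alpha$ is inserted. Since $\pi\colon \tilde Y\to Y$ is an isomorphism away from the exceptional divisor $\tilde Y_0$, the edge and $v_\infty$ lift uniquely to $\tilde Y$ while $v_0$ lifts to a vertex over $\tilde Y_0\cong \Pp(V)$ of class $\pi^!\beta$; hence the sum in Proposition \ref{prop:res} reduces to a single term $\tilde\Gamma$.

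Next I would compute the equivariant Euler classes of both virtual normal bundles. Using the standard Graber--Pandharipande analysis of a fixed decorated graph, $e_{\C^*}(N^{vir}_\Gamma)$ factors into three pieces: the node-smoothing contribution at $v_0$, which produces $\tfrac{\lambda-h}{k}-\psi$; the deformation-automorphism complex of the degree-$k$ cover of the fiber (including the $\tfrac{1}{k}$ automorphism factor), which produces the product $\prod_{m=1}^{k}\tfrac{m}{k}(h-\lambda)\cdot \prod_{m=1}^{k-1}\tfrac{m}{k}(\lambda-h)$; and the $H^1$-obstruction of the cover with values in the normal direction of the fiber inside $Y$, which evaluates to $\prod_{m=1}^{k-1}c_V\bigl(h+\tfrac{m}{k}(\lambda-h)\bigr)$ because this normal direction is $V$ and the intermediate equivariant weights along the degree-$k$ cover are $h+\tfrac{m}{k}(\lambda-h)$ for $1\le m\le k-1$. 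On the $\tilde\Gamma$-side the first two pieces repeat verbatim, but the $V$-valued obstruction is replaced by an $\sO(-1)$-valued obstruction, since the normal direction of $\tilde Y_0$ inside $\tilde Y$ is the line $\sO(-1)$; this single $\sO(-1)$-contribution is exactly what is visible on the left-hand side of the claim as the $\sO(-1)$-twist.

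Integrating each residue against $\prod_i ev_i^*\sigma_i$ and the $v_\infty$-insertion $h^e\pi^*\alpha$ identifies the $\tilde\Gamma$-residue with the left-hand side of the corollary, realised as an $\sO(-1)$-twisted invariant on $\Pp(V)$ in class $\pi^!\beta+kf$, and identifies the $\Gamma$-residue, after applying $\tau_*$ (which on cohomology contributes $\pi_*\colon H^*(\Pp(V))\to H^*(X)$ on the $v_\infty$-factor while leaving the $\bM_{0,n+1}(X,\beta)$-integrand untouched), with the right-hand side. The main obstacle is the bookkeeping in the three Euler-class pieces above: one must write down the perfect obstruction theory of a multiply-covered fixed edge in both $Y$ and $\tilde Y$ and track the equivariant weights and the automorphism factor carefully enough to match the claimed denominator and numerator factors, including the $c_V$-product from the $V$-normal direction. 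Once this bookkeeping is in place---which is essentially assembled in Appendix \ref{app:comp}---Proposition \ref{prop:res} delivers the identity.
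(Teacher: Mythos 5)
Your overall strategy is the paper's: pick the decorated graph $\Gamma$ with a vertex of class $\beta$ over $Y_0$, a degree-$k$ edge, and a degree-zero vertex carrying the extra marking over $Y_\infty$; then apply Proposition~\ref{prop:res} and match localization residues, as the paper does in Section~\ref{section:thmmain0} and Appendix~\ref{app:comp}.

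However, your Euler-class bookkeeping on the $\Gamma$/$Y$-side has a double-count. You attribute to $e_{\C^*}(N^{vir}_\Gamma)$ \emph{both} the factor $\prod_{m=1}^{k}\tfrac{m}{k}(h-\lambda)\cdot\prod_{m=1}^{k-1}\tfrac{m}{k}(\lambda-h)$ (``deformation--automorphism complex of the cover'') \emph{and} the factor $\prod_{m=1}^{k-1}c_V\bigl(h+\tfrac{m}{k}(\lambda-h)\bigr)$ (``$V$-valued obstruction''). But the paper's computation (see the last displays of Appendix~\ref{app:comp}) assigns to $\bM_\Gamma$ only
\[
\prod_{m=1}^{k}\Bigl(\tfrac{m}{k}(h-\lambda)\Bigr)\ \prod_{m=1}^{k-1}c_V\Bigl(h+\tfrac{m}{k}(\lambda-h)\Bigr),
\]
with the $\prod_{m=1}^{k-1}\tfrac{m}{k}(\lambda-h)$ piece appearing only on the $\tilde\Gamma$/$\tilde Y$-side. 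The reason is that over $Y_\infty\cong\Pp(V)$, the vector bundle $V$ carries the tautological subbundle $\sO(-1)$, whose Chern root is $-h$; hence
\[
c_V\Bigl(h+\tfrac{m}{k}(\lambda-h)\Bigr)\ =\ \tfrac{m}{k}(\lambda-h)\cdot c_{V/\sO(-1)}\Bigl(h+\tfrac{m}{k}(\lambda-h)\Bigr),
\]
so the degree-$k$ edge-tangent contribution $\prod_{m=1}^{k-1}\tfrac{m}{k}(\lambda-h)$ is \emph{already contained} in $\prod_{m=1}^{k-1}c_V(\dotsb)$. Putting in that product again overcounts, and your account would produce an identity differing from \eqref{eqn:rel4} by a spurious factor $\prod_{m=1}^{k-1}\tfrac{m}{k}(\lambda-h)$ on the right-hand side. (Concretely: at $m$-th intermediate fixed point the moving part of $T_pY|_L$ pulled back along the cover is governed by $(V\oplus\sO)\otimes\sO_L(1)$ via the relative Euler sequence, and the tangent-to-edge direction is the $\sO(-1)\subset V$ slot --- it is not a separate line from $V$.) If you correct this and write the $\Gamma$-side edge contribution as the paper does, the two residues match exactly and yield~\eqref{eqn:rel4}.
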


\subsection{Completing the proof of Theorem \ref{thm:vircls2}}\label{sec:vircls2}
Theorem \ref{thm:vircls2} is true if and only if the ``correspondence of
residues" in Proposition \ref{prop:res} holds for all $\bM_\Gamma$ no matter
whether it is inside $\scrU$ or not (see Remark \ref{rmk:iff}). Now that Theorem \ref{thm:vircls} is proven, we will show that the correspondence of residues over $\scrU$ is enough to establish the correspondence of all residues.

For each vertex $v$ in a decorated graph, we denote the valence of $v$ by $val(v)$ and the number of markings on $v$ by $n(v)$.

Choose any $\bM_\Gamma$. The decorated graph $\Gamma$ might involve vertices over $Y_\infty$ with nontrivial degrees. Suppose $v_1,\ldots, v_l$ are those vertices with degree $\beta_1,\ldots,\beta_l$ correspondingly. Let $\Gamma(v_i)$ be the graph consisting of a single vertex $v_i$ of degree $\beta_i$ with $val(v_i)+n(v_i)$ markings. In the meantime, we can break the graph $\Gamma$ into pieces $\Gamma_1,\ldots,\Gamma_m$ along $v_1,\ldots,v_l$, where the original vertices of $v_1,\ldots, v_l$ in each $\Gamma_i$ are replaced by unstable vertices each of degree $0$ with $1$ marking. To sum it up, we just break the decorated graph $\Gamma$ into $\Gamma(v_1),\ldots,\Gamma(v_l)$ and $\Gamma_1,\ldots,\Gamma_m$. By gluing the corresponding markings of $\Gamma(v_1),\ldots,\Gamma(v_l), \Gamma_1,\ldots,\Gamma_m$, we can recover $\Gamma$.

By definition we have $\bM_{\Gamma_i}\in \scrU$. In terms of fixed loci of moduli space, we have
\[
\bM_\Gamma=(\prod_{i=1}^m \bM_{\Gamma_i}) \times_\Gamma (\prod_{j=1}^l \bM_{0,val(v_j)+n(v_j)}(Y_\infty, \beta_j)),
\]
where $\times_\Gamma$ is the fiber product gluing the corresponding markings
according to the splitting of the graph $\Gamma$. In $\bM_{0,n}(\tilde
Y,\beta)$, the fixed locus $\bM_{\tilde\Gamma}$ such that
$\tau(\tilde\Gamma)=\Gamma$ can be described by a similar gluing process. We have
\[
\bM_{\tilde \Gamma}=(\prod_{i=1}^m \bM_{\tilde \Gamma_i}) \times_{\tilde\Gamma} (\prod_{j=1}^l \bM_{0,val(v_j)+n(v_j)}(\tilde Y_\infty, \beta_j)),
\]
where $\tilde \Gamma_i$ is the graph such that $\tau(\tilde\Gamma_i)=\Gamma_i$.

Now $e_{\C^*}(N_{\Gamma}^{vir})$ can be
computed using $e_{\C^*}(N_{\Gamma_i}^{vir})$, $\psi$-classes in
$\bM_{0,val(v_i)+n(v_i)}(Y_\infty, \beta_i))$ (smoothing the nodes glued by
``$\times_\Gamma $"), $TY|_{Y_\infty}$, and $(\sO(-1))_{0,val(v_i)+n(v_i),\beta_i}$ in
$K^0(\bM_{0,val(v_i)+n(v_i)}(Y_\infty, \beta_i)))$. If we use $Res(\Gamma)$ to
denote the localization residue of the graph $\Gamma$, the computation can be
schematically written as follows.
\[
Res(\Gamma)=\displaystyle\int_{\bM_\Gamma}
\prod\limits_{i=1}^mRes(\bM_{\Gamma_i})
\prod\limits_{j=1}^l\prod\limits_{e\in
  E(v_j)}\dfrac{1}{(\lambda+h)/d_e-\psi_{(e,v_j)}}
\prod\limits_{k=1}^lRes(\Gamma(v_k)), 
\]
where $E(v_j)$ is the set of edges incident to $v_j$, $d_{e}$ is the degree of
the edge $e$, $\psi_{(e,v_j)}$ is the psi-class in $\bM_{\Gamma(v_j)}$
corresponding to the marking that was supposed to glue with edge $e$ in the
original graph $\Gamma$. Note that if $v_j$ is an
unstable vertex, the corresponding part of the formula needs a slight modification. This is standard and
we do not spell it out to introduce unnecessary notations. 

Similar thing happens to
the localization residue of $\tilde\Gamma$ as well.
\[
Res(\tilde\Gamma)=\displaystyle\int_{\bM_{\tilde\Gamma}}
\prod\limits_{i=1}^mRes(\bM_{\tilde\Gamma_i})
\prod\limits_{j=1}^l\prod\limits_{e\in
  E(v_j)}\dfrac{1}{(\lambda+h)/d_e-\psi_{(e,v_j)}}
\prod\limits_{k=1}^lRes(\Gamma(v_k)). 
\]
Here we use the fact that $Y_{\infty}\cong \tilde Y_{\infty}$, and identify the
residues of vertices over $Y_{\infty}$ with the ones over $\tilde Y_\infty$. By using the correspondence of
residues between $\bM_{\Gamma_i}$ and $\bM_{\tilde\Gamma_i}$ and apply
projection formula to the rest of the factors, one can see that
the localization residues for $\bM_{\Gamma}$ and $\bM_{\tilde\Gamma}$ match.

\section{$\Pp^n$-fibrations}\label{sec:Pnfibr}
Let $\pi: P\rightarrow X$ be a smooth morphism whose fibers are isomorphic to projective spaces. Then $P$ is a \emph{Brauer--Severi} scheme over $X$. We would like to recall a few standard facts about Brauer--Severi schemes. According to \cite{Gro_Br}[I, Th\'eor\`eme 8.2], it is \'etale-locally a product with projective space. It also induces a class in $H^2_{\text{\'et}}(X,\mathbb G_m)$ called \emph{Brauer class}. Let's denote it by $\alpha(P)$. $\alpha(P)=0$ if and only if the fibration $\pi:P\rightarrow X$ comes from projectivization of a vector bundle. We also have the following standard fact.
\begin{lem}
$\tilde\pi:P\times_X P\rightarrow P$ is the projectivization of a vector bundle over $P$.
\end{lem}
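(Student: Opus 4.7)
The plan is to exploit the standard fact (recalled just before the lemma) that a Brauer--Severi scheme $Q \to B$ is the projectivization of a vector bundle over $B$ if and only if its Brauer class $\alpha(Q) \in H^2_{\text{\'et}}(B, \G_m)$ vanishes. So I would reduce the lemma to showing that the Brauer class of $\tilde\pi: P\times_X P \rightarrow P$ is zero in $H^2_{\text{\'et}}(P, \G_m)$.

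First, I would identify $\tilde\pi: P\times_X P \to P$ (via the first projection, say) as the base-change of the Brauer--Severi scheme $\pi:P\to X$ along $\pi$ itself. Since the formation of the Brauer class of a Brauer--Severi scheme is functorial under base change, this gives
\[
\alpha(P\times_X P / P) = \pi^*\alpha(P) \in H^2_{\text{\'et}}(P, \G_m).
\]

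Next, the crucial observation is that the diagonal map $\Delta_\pi: P \hookrightarrow P\times_X P$ is a section of $\tilde\pi$. Indeed, this is immediate from the universal property of the fiber product: $\Delta_\pi$ corresponds to the pair $(\mathrm{id}_P, \mathrm{id}_P)$, and composition with the first projection $\tilde\pi$ gives back $\mathrm{id}_P$. A Brauer--Severi scheme that admits a section is Zariski-locally (not merely \'etale-locally) a projective bundle --- concretely, the section gives a line bundle on the total space whose direct image yields a vector bundle whose projectivization recovers the scheme. Equivalently, the Brauer class of a Brauer--Severi scheme with a section vanishes, since the section trivializes the associated $\mathrm{PGL}_n$-torsor.

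Combining these two steps, $\pi^*\alpha(P) = \alpha(P\times_X P/P) = 0$, so by the criterion recalled before the lemma, $\tilde\pi:P\times_X P\rightarrow P$ comes from the projectivization of a vector bundle on $P$. There is no real obstacle here --- the content of the proof is entirely encapsulated in the functoriality of the Brauer class under base change together with the triviality caused by the existence of a section via the diagonal; the only care needed is in citing the precise form of the ``section $\Rightarrow$ trivial Brauer class'' statement (which is in the same reference \cite{Gro_Br} as the earlier fact).
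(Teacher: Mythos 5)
Your proposal is correct and takes essentially the same route as the paper: both observe that the diagonal $\Delta_\pi: P \to P\times_X P$ is a section of $\tilde\pi$ and then invoke the standard fact that a Brauer--Severi scheme with a section is the projectivization of a vector bundle (the paper cites \cite{B}[Lemma 2.4] for exactly this). The extra remark about functoriality of the Brauer class under base change, $\alpha(P\times_X P/P)=\pi^*\alpha(P)$, is not strictly needed for this lemma but is precisely what the paper uses immediately afterwards to deduce Lemma \ref{lem:brauer0}.
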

This can be seen by, for example, observing $\tilde\pi$ has a section, and then apply \cite{B}[Lemma 2.4]. As a result, we also have the following.
\begin{lem}\label{lem:brauer0}
The induced pull-back $\pi^*:H^2_{\text{\'et}}(X;\mathbb G_m) \rightarrow H^2_{\text{\'et}}(P;\mathbb G_m)$ sends $\alpha(P)$ to $0$.
\end{lem}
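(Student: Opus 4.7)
The plan is to exploit the functoriality of the Brauer class of a Brauer--Severi scheme under base change, combined with the previous lemma asserting that $P \times_X P \to P$ is the projectivization of a vector bundle.

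First I would observe that the second projection $\tilde\pi \colon P \times_X P \to P$ is, by construction, the base change of $\pi \colon P \to X$ along $\pi$ itself. In particular $\tilde\pi$ is again a Brauer--Severi scheme over $P$, and the assignment $P \mapsto \alpha(P)$ from Brauer--Severi schemes to classes in $H^2_{\text{\'et}}(-,\mathbb G_m)$ is natural with respect to pullback. Hence the Brauer class of $\tilde\pi \colon P \times_X P \to P$ equals $\pi^*\alpha(P)$.

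Next I would invoke the lemma proved just before the statement: $\tilde\pi \colon P \times_X P \to P$ is the projectivization of a vector bundle on $P$. By the standard characterization of the kernel of the map from isomorphism classes of Brauer--Severi schemes to Brauer classes, as recorded right before the statement (namely $\alpha(Q) = 0$ iff $Q$ comes from a projective bundle associated to a vector bundle, cf.\ \cite{Gro_Br}), this forces the Brauer class of $\tilde\pi$ to vanish.

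Combining the two steps yields $\pi^*\alpha(P) = \alpha(\tilde\pi) = 0$, which is exactly the assertion of the lemma. There is essentially no obstacle here; the only point requiring a touch of care is the naturality statement for Brauer classes under base change, which is standard and already implicit in the \'etale-local description of Brauer--Severi schemes cited at the start of section~\ref{sec:Pnfibr}.
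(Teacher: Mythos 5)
Your proposal is correct and matches the paper's (unstated but clearly intended) argument: the paper presents the lemma as an immediate consequence of the preceding lemma that $P\times_X P \to P$ is a projectivization, with functoriality of the Brauer class under base change and the vanishing criterion for projective bundles doing exactly the work you spell out.
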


Now we want to prove Theorem \ref{thm:main1}. Let's recall the set-up. Let $X$ be a smooth projective variety, $\pi_1:P_1\rightarrow X$, $\pi_2:P_2\rightarrow X$ be projective smooth morphisms whose fibers are isomorphic to $\Pp^n$. Suppose 
\[
\alpha(P_1)=\alpha(P_2)\in H^2_{\text{\'et}}(X;\mathbb G_m).
\]
We also assume the existence of a ring isomorphism 
\[
\scrF:H^*(P_1;\Q) \rightarrow H^*(P_2;\Q),
\]
such that
\begin{enumerate}
\item $\scrF$ sends the subgroup $H^*(P_1;\Z)/tor(H^*(P_1;\Z))$ into $H^*(P_2;\Z)/tor(H^*(P_2;\Z))$. Here $tor(-)$ denotes the torsion subgroup.
\item $\scrF$ restricts to identity on $H^*(X;\Q)$. Here we identify $H^*(X;\Q)$ as subrings under pull-backs of $\pi_1,\pi_2$.
\item $\scrF(c_1(\omega_{\pi_1}))=c_1(\omega_{\pi_2})$, where $\omega_{\pi_i}$ is the corresponding relative canonical sheaf.
\end{enumerate}

\begin{lem}\label{lem:projbdl}
If $\alpha(P_1)=\alpha(P_2)=0$, the existence of such $\scrF$ is equivalent to the existence of vector bundles $V_1, V_2$ over $X$ such that $c(V_1)=c(V_2)$ and $P_1\cong\Pp(V_1), P_2\cong\Pp(V_2)$.
\end{lem}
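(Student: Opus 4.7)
The plan is to treat the two implications separately. For the ``if'' direction I would argue essentially formally: given $V_1,V_2$ with $c(V_1)=c(V_2)$ and $P_i\cong\Pp(V_i)$, the projective bundle formula presents $H^*(P_i;\Q)=H^*(X;\Q)[h_i]/(R(h_i))$ with the \emph{same} monic polynomial $R(h)=\sum_{k=0}^{n+1}c_k(V_1)h^{n+1-k}$, where $h_i=c_1(\sO_{P_i}(1))$. Sending $h_1\mapsto h_2$ and fixing $H^*(X;\Q)$ defines a ring isomorphism $\scrF$; property (a) follows from the integral projective bundle formula (the $h_i^k$ give an $H^*(X;\Z)/\mathrm{tor}$-basis), (b) is built in, and (c) follows from the standard identity $c_1(\omega_{\pi_i})=-(n+1)h_i+\pi_i^*c_1(V_i)$ together with $c_1(V_1)=c_1(V_2)$.

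For the ``only if'' direction, I would first choose any bundles $V_1,V_2$ with $P_i\cong\Pp(V_i)$ (possible since $\alpha(P_i)=0$) and, using the summand decomposition $H^2(P_2;\Q)=\pi_2^*H^2(X;\Q)\oplus\Q h_2$, write
\[
\scrF(h_1)=a\,h_2+\pi_2^*\gamma,\qquad a\in\Q,\ \gamma\in H^2(X;\Q).
\]
Applying $\scrF$ to $c_1(\omega_{\pi_1})$, invoking (b), and equating with $c_1(\omega_{\pi_2})$ via (c), comparison of $h_2$-coefficients should force $a=1$, leaving $\gamma=(c_1(V_1)-c_1(V_2))/(n+1)$.

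The main obstacle is then realizing $\gamma$ as the first Chern class of a genuine line bundle on $X$. Integrality will follow from property (a): since $h_2$ and $\scrF(h_1)$ both lie in $H^2(P_2;\Z)/\mathrm{tor}$, so does $\pi_2^*\gamma$, and the integral projective bundle formula forces $\gamma\in H^2(X;\Z)/\mathrm{tor}$. On the other hand $\gamma$ is a rational combination of Chern classes, hence of Hodge type $(1,1)$; the Lefschetz $(1,1)$ theorem, applicable because $X$ is smooth projective, then produces $L\in\mathrm{Pic}(X)$ with $c_1(L)=\gamma$. Setting $V_2'=V_2\otimes L$ gives $\Pp(V_2')=P_2$, and the new hyperplane class on $\Pp(V_2')$ is $h_2'=h_2+\pi_2^*\gamma=\scrF(h_1)$.

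To finish, I would compare Chern relations on $P_2$: applying $\scrF$ to $R_{V_1}(h_1)=0$ yields $R_{V_1}(h_2')=0$ in $H^*(P_2;\Q)$, while by construction $R_{V_2'}(h_2')=0$. Since $\{1,h_2',\ldots,(h_2')^n\}$ is an $H^*(X;\Q)$-free basis of $H^*(P_2;\Q)$ and both polynomials are monic of degree $n+1$, subtracting forces $R_{V_1}=R_{V_2'}$, i.e.\ $c(V_1)=c(V_2\otimes L)$, which gives the claim.
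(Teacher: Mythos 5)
Your proof is correct and follows essentially the same route as the paper: extract $V_1,V_2$ from $\alpha(P_i)=0$, use the $\omega_\pi$ relation together with conditions (b), (c) to pin down $\scrF(h_1)$, deduce integrality of the offset class from condition (a) and the integral projective bundle decomposition, apply the Lefschetz $(1,1)$ theorem to produce $L$, and twist $V_2$ by $L$. The only differences are expository: you spell out the $a=1$ step and the final Chern-relation comparison that the paper leaves as ``one can verify,'' and you use the Grothendieck sign convention for $\omega_\pi$ rather than the paper's, which is immaterial.
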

\begin{proof}
The existence of such $V_1,V_2$ clearly induces such an $\scrF$. It suffices to prove the other direction. Since $\alpha(P_1)=0$, there exists a $V_1$ such that $P_1=\Pp(V_1)$. We have $\omega_{\pi_1}=-\sO_{\Pp(V_1)}(n+1)-\pi_1^*det(V_1)$. The same happens for $P_2$ and we have $P_2=\Pp(V_2)$ and $\omega_{\pi_2}=-\sO_{\Pp(V_2)}(n+1)-\pi_1^*det(V_2)$. $\scrF$ preserving relative canonical sheaf implies that 
\[
\scrF(c_1(\sO_{\Pp(V_1)}(1)))=c_1(\sO_{\Pp(V_2)}(1))-\pi_2^*\left( \dfrac{c_1(det(V_1)-det(V_2))}{n+1} \right).
\]
But since everything else is in $H^2(P_2;\Z)/tor(H^2(P_2;\Z))$, we also have that
\[
\pi_2^*\left( \dfrac{c_1(det(V_1)-det(V_2))}{n+1} \right) \in H^2(P_2;\Z)/tor(H^2(P_2;\Z)).
\]
Since $\pi_2^*$ is an embedding on $H^2(X;\Z)/tor(H^2(X;\Z))$, we have
\[
\left( \dfrac{c_1(det(V_1)-det(V_2))}{n+1} \right) \in H^2(X;\Z)/tor(H^2(X;\Z)).
\]
On the other hand, this class lies in $H^{1,1}(X)$. By Lefschetz theorem, there exists a divisor $L\in Pic(X)$ such that $c_1(L)=\dfrac{c_1(det(V_1)-det(V_2))}{n+1}$. If one changes $V_2$ into $V_2\otimes L$, one can verify that its Chern class will match the one with $V_1$ because $\scrF$ is a ring homomorphism.
\end{proof}

In order to reduce the problem to the projectivization of vector bundles, we do a base-change along $\pi_1:P_1\rightarrow X$ ($\pi_2$ works equally well, but we need to fix one here). Consider the following diagram for both $i=1,2$
\[
\xymatrix{
P_1\times_X P_i \ar[r]^-{pr_{2,i}} \ar[d]^{pr_{1,i}} & P_i \ar[d]^{\pi_i} \\
P_1 \ar[r] & X.
}
\]

Because $\alpha(P_1)=\alpha(P_2)$ and Lemma \ref{lem:brauer0}, this class will be mapped to $0$ via pull-back along either $\pi_1$ or $\pi_2$. As a result, the four maps $pr_{1,i}$ and $pr_{2,i}$ for $i=1,2$ all come from the projectivization of vector bundles.

Because of the degeneration of Leray spectral sequence at $E_2$, one can check that $H^*(P_1\times_X P_i;\Q)=H^*(P_i;\Q)\otimes_{H^*(X;\Q)} H^*(P_1;\Q)$. The ring isomorphism $\scrF$ passes to $P_1\times_X P_i$ over $P_1$ and all conditions are preserved (by slight abuse of notation, we still use $\scrF$ for it). By Lemma \ref{lem:projbdl} and the main theorem in \cite{F}, the Gromov--Witten theory of $P_1\times_X P_1$ and $P_1\times_X P_2$ are identified via $\scrF$. To be precise,
\[
\langle \psi^{k_1}\sigma_1,\dotsc,\psi^{k_n}\sigma_n \rangle_{0,n,\beta}^{P_1\times_X P_1}=\langle \psi^{k_1}\scrF\sigma_1,\dotsc,\psi^{k_n}\scrF\sigma_n \rangle_{0,n,\Psi(\beta)}^{P_1\times_X P_2}
\]

Finally, we want to relate back via $pr_{2,i}$ to prove such identification for $P_1$ and $P_2$. Suppose $P_1\times_X P_i=\Pp_{P_i}(V_i)$ for $i=1,2$. Because we can tensor each of them with $det(T_{\pi_i})\otimes \pi_i^*L$ where $L$ is a sufficiently ample line bundle, we can assume both $V_1$ and $V_2$ are globally generated. Now we are at a place to apply our Corollary \ref{cor:main}. Since twisted Gromov--Witten invariant can be computed by descendant Gromov--Witten invariants, we also have
\[
\langle \psi^{k_1}\sigma_1,\dotsc,\psi^{k_n}\sigma_n \rangle_{0,n,\beta}^{\Pp_{P_1}(V_1),\sO(-1)}=\langle \psi^{k_1}\scrF\sigma_1,\dotsc,\psi^{k_n}\scrF\sigma_n \rangle_{0,n,\Psi(\beta)}^{\Pp_{P_2}(V_2),\sO(-1)}.
\]
Immediately applying Corollary \ref{cor:main}, we have
\[
\langle \sigma_1,\dotsc,\sigma_n \rangle_{0,n,\beta}^{P_1}=\langle \scrF\sigma_1,\dotsc,\scrF\sigma_n \rangle_{0,n, \Psi(\beta)}^{P_2}.
\]
This is almost the form of Theorem \ref{thm:main1} except we don't have descendants here. However, since we only work on genus-$0$ Gromov--Witten theory, descendants invariants can be computed via the topological recursion relation from the absolute invariants. Since the cohomology rings of $H^*(P_1;\Q)$ and $H^*(P_2;\Q)$ are identified, each step in the computation can be matched. This concludes our proof for Theorem \ref{thm:main1}.

\appendix
\section{Correspondence of residues}\label{app:res}
Let's put everything into a general setting. Let $f:X\rightarrow Y$ be a
proper map between Deligne--Mumford stacks. Suppose $T=\C^*$ acts on
both $X$ and $Y$ and $f$ is equivariant. Let the connected components of the fixed loci of $X$ and $Y$ be
$X_1,\dotsc,X_n$ and $Y_1,\dotsc,Y_m$, respectively. Denote
$i_{X_j}:X_j\hookrightarrow X$, $i_{Y_i}:Y_i\hookrightarrow Y$ as the
corresponding embeddings. Suppose that there are
equivariant perfect obstruction theories of $X$ and $Y$. They induce natural
perfect obstruction theories on components $X_j$,$Y_i$ (\cite{GP}). We want to prove the following theorem. 
\begin{prop}
Assume $X$ and $Y$ admit equivariant embeddings into smooth Delign-Mumford
stacks with $\C^*$ actions. Suppose $f_*$ preserves equivariant virtual classes, i.e., $f_*[X]^{vir,eq}=[Y]^{vir,eq}$.
Fixing a connected component $Y_i$ of the fixed locus in $Y$. We have
the following equality in $A_*^{\C^*}(Y_i)$.
\[
f_* \left(\sum\limits_{f(X_j)\subset Y_i} \displaystyle\frac{[X_j]^{vir}}{e_{\C^*}(N^{vir}_{X_j/X})} \right) =  \displaystyle\frac{[Y_i]^{vir}}{e_{\C^*}(N^{vir}_{Y_i/Y})},
\]
where $N^{vir}_{X_j/X}$ and $N^{vir}_{Y_i/Y}$ are the corresponding virtual
normal bundles, and we slightly abuse the notation by writing $f|_{f^{-1}(Y_i)}$
simply as $f$.
\end{prop}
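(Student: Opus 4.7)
The plan is to deduce the single-component identity from the global hypothesis $f_*[X]^{vir,eq}=[Y]^{vir,eq}$ by applying the Graber--Pandharipande virtual localization formula on both sides and then isolating the piece supported over a fixed $Y_i$ via the equivariant localization isomorphism. Throughout one works in the localized equivariant Chow group (inverting the equivariant parameter $\lambda$, which makes the relevant Euler classes units). Virtual localization applied to $X$ and $Y$ separately gives
\begin{equation*}
[X]^{vir,eq} \;=\; \sum_{j} (i_{X_j})_*\frac{[X_j]^{vir}}{e_{\C^*}(N^{vir}_{X_j/X})}, \qquad [Y]^{vir,eq} \;=\; \sum_{i} (i_{Y_i})_*\frac{[Y_i]^{vir}}{e_{\C^*}(N^{vir}_{Y_i/Y})}.
\end{equation*}

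Since $f$ is equivariant and each $X_j$ is connected, its image $f(X_j)$ lies in a unique fixed component $Y_{\kappa(j)}$ of $Y$, and $f\circ i_{X_j}=i_{Y_{\kappa(j)}}\circ f_j$ with $f_j:=f|_{X_j}$ proper. Pushing the first identity forward by $f$, grouping the summands according to the value of $\kappa(j)$, and subtracting the second, the hypothesis $f_*[X]^{vir,eq}=[Y]^{vir,eq}$ is equivalent to the single vanishing
\begin{equation*}
\sum_{i} (i_{Y_i})_* \left( (f_i)_*\!\!\sum_{f(X_j)\subset Y_i}\!\!\frac{[X_j]^{vir}}{e_{\C^*}(N^{vir}_{X_j/X})} \;-\; \frac{[Y_i]^{vir}}{e_{\C^*}(N^{vir}_{Y_i/Y})}\right) \;=\; 0
\end{equation*}
in the localized equivariant Chow group of $Y$, where $f_i$ denotes the restriction of $f$ to $f^{-1}(Y_i)$. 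The conclusion will follow if each bracketed class is individually zero, which amounts to the injectivity of the direct sum of pushforwards $\bigoplus_i (i_{Y_i})_*\colon \bigoplus_i A_*^{\C^*}(Y_i)_{loc} \to A_*^{\C^*}(Y)_{loc}$.

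The main obstacle I expect is precisely this injectivity in the Deligne--Mumford setting, and that is exactly the point of the hypothesis that $X$ and $Y$ admit equivariant embeddings into smooth DM stacks with $\C^*$-action: using the ambient smooth stacks one reduces to the standard Atiyah--Bott style equivariant localization theorem of Edidin--Graham (and its DM-stack extension due to Kresch), which asserts that pushforward from the fixed locus becomes an isomorphism after inverting $\lambda$. Granted this, each bracket vanishes, which gives the desired equality in $A_*^{\C^*}(Y_i)_{loc}$; clearing the Euler class denominators then yields the proposition as stated.
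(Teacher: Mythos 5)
Your proof is correct and follows essentially the same route as the paper: both apply the Graber--Pandharipande localization formula to $[X]^{vir,eq}$ and $[Y]^{vir,eq}$, invoke equivariance of $f$ to see that each $X_j$ maps into a unique $Y_i$, and then use the Edidin--Graham/Kresch localization theorem (the decomposition $A_*^{\C^*}(Y)\otimes\Q[\lambda,\lambda^{-1}]\cong\bigoplus_i A_*^{\C^*}(Y_i)\otimes_{\Q[\lambda]}\Q[\lambda,\lambda^{-1}]$, which is the same as the injectivity you cite) to isolate the $Y_i$-component. The only cosmetic difference is that the paper phrases the last step via ``$f_*$ respects the decomposition and one compares components,'' whereas you phrase it as a subtraction followed by injectivity; your closing remark about ``clearing the Euler class denominators'' is superfluous, since the stated equality already lives in the localized group, but it does not affect correctness.
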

\begin{proof}
  This is basically a direct consequence of the decomposition of equivariant
  Chow groups in \cite[Theorem 5.3.5]{Kr} and the explicit localization formula
  in \cite{GP}. More precisely, by \cite[Theorem 5.3.5]{Kr}, the equivariant Chow groups of $X$ and $Y$ admit
  the following decompositions.
  \[
    A^{\C^*}_*(X) \otimes_{\Q} \Q[\lambda,\lambda^{-1}] = \bigoplus\limits_{j=1}^n A^{\C^*}_*(X_j) \otimes_{\Q[\lambda]} \Q[\lambda,\lambda^{-1}],
  \]
  \[
    A^{\C^*}_*(Y) \otimes_{\Q} \Q[\lambda,\lambda^{-1}] = \bigoplus\limits_{i=1}^m A^{\C^*}_*(Y_i) \otimes_{\Q[\lambda]} \Q[\lambda,\lambda^{-1}].
  \]
  Since $f$ maps fixed locus into fixed locus, $f_*$ respects these
  decompostions. Now \cite{GP} tells us that under the above decompositions, the
  $X_j$ component of $[X]^{vir,eq}$ can be explicitly written as
  $\dfrac{[X_j]^{vir}}{e_{\C^*}(N^{vir}_{X_j/X})}$. Since $f_*$ respects the
  decomposition, the $Y_i$ component of
  $f_*[X]^{vir,eq}$ is $f_* \left(\sum\limits_{f(X_j)\subset Y_i} \displaystyle\frac{[X_j]^{vir}}{e_{\C^*}(N^{vir}_{X_j/X})} \right)$. On
  the other hand, the $Y_i$ component of $[Y]^{vir,eq}$ is
  $\dfrac{[Y_i]^{vir}}{e_{\C^*}(N^{vir}_{Y_i/Y})}$. Since
  $f_*[X]^{vir,eq}=[Y]^{vir,eq}$, the result follows.
\end{proof}
\begin{rmk}\label{rmk:iff}
  Conversely, if all localization residues match, one can conclude that
  $f_*[X]^{vir,eq}=[Y]^{vir,eq}$ by similar argument. Again, the
  decomposition \cite[Theorem 5.3.5]{Kr} is the key. 
\end{rmk}

\section{Computing the virtual normal bundles}\label{app:comp}
Recall that $Y=\Pp_X(V\oplus \sO)$, $\tilde Y=\Pp_{\Pp(V)}(\sO(-1)\oplus \sO)$,
and the map $\pi:\tilde Y\rightarrow Y$ contracts the zero section $\tilde
Y_0=\Pp_{\Pp(V)}(\sO)\subset \tilde Y$ to the zero section $Y_0=\Pp_X(\sO)\subset Y$.
We would like to give some extra explanations to the derivation of equation \eqref{eqn:rel1}-\eqref{eqn:rel4}.
\subsection{A single vertex over $Y_0$}
Now $\Gamma$ consists of a single vertex over $Y_0$. First we want to compute
$e_{\C^*}(N^{vir}_{\Gamma})$. Recall there exists a two term complex
$[E^{-1}\rightarrow E^0$] consisting
of vector bundles resolving the relative perfect obstruction theory of
$\bM_{0,n}(Y,\beta)$ over the moduli of prestable curves. Note that $E^\bullet$
is quasi-isomorphic to $(R^\bullet(ft_{n+1})_*ev^*_{n+1}TY)^\vee$ (Notations see
Section \ref{sec:twistedgw}). Since $Y_0$ is the fixed locus and the curve
completely maps to $Y_0$, one can see that the moving part of $(E^\bullet)^\vee$
is a complex resolving $R^\bullet(ft_{n+1})_*ev^*_{n+1}N_{Y_0/Y}$. Comparing
with the definition in Section \ref{sec:twistedgw}, the localization residue is
exactly the Gromov--Witten invariants twisted by $V\cong N_{Y_0/Y}$. To be
precise, if we fix the insertions $\sigma_1,\dotsc,\sigma_n\in H^*(X)$, the
pushforward of the localization residue of $\Gamma$ to a point results in the
following invariant (Also see the right hand side of equation \eqref{eqn:rel1}). 
\[
\langle \sigma_1,\ldots, \sigma_n \rangle_{0,n,\beta}^{X, V}.
\]

On the other hand, one sees that there is only one graph $\tilde\Gamma$ that lifts
$\Gamma$. $\tilde\Gamma$ consists of only one vertex over $\tilde Y_0$. By a
similar argument, the corresponding localization residue gives rise to the Gromov--Witten
invariant of $\Pp_X(V)\cong \tilde Y_0$ twisted by $\sO(-1)$. To be precise, it
is
\[
\langle \pi^*\sigma_1,\ldots, \pi^*\sigma_n \rangle_{0,n,\pi^!\beta}^{\Pp_X(V), \sO(-1)}.
\]

\subsection{A vertex with edges}
Now suppose the graph $\Gamma$ consists of a vertex $v_1$ over $Y_0$ of class
$\beta$ with $n$ markings,
a vertex $v_2$ over $Y_{\infty}$ of class $0$ (i.e. degree $0$) with $1$ marking, and an edge of class
$f$ (the curve class of line inside a fiber) connecting these two vertices.
Label the marking on $v_2$ as the $1$-st marking, and the $n$ markings on
$v_1$ as $2,\dotsc,n+1$-th markings. On the $1$-st marking, we choose
$h^e\pi^*\alpha$ as our insertion. On the $2,\dotsc,n+1$-th markings,
$\pi^*\sigma_1,\dotsc,\pi^*\sigma_n$ are the corresponding insertions.

The lifting of the graph $\Gamma$ is a graph $\tilde\Gamma$ with a vertex
$\tilde v_1$ over $\tilde Y_0$ of class $\pi^!\beta+f$, a vertex $\tilde v_2$
over $\tilde Y_\infty$ of class $0$ and an edge between them.

The explicit localization formula for an arbitrary $\C^*$-action is already
studied in different papers, for example, \cite{MM} and \cite{FL}, among others.
We have
\[
  Res(\tilde\Gamma)=\displaystyle\int_{\bM_{\tilde\Gamma}}
  \dfrac{1}{e_{\C^*}(\sO(-1)_{0,n+1,\pi^!\beta+f})}\dfrac{ev^*_1(h^e\pi^*\alpha)\prod\limits_{i=1}^n ev^*_{i+1}\pi^*\sigma_n}{ev^*_1(h-\lambda)(ev^*_1(\lambda-h)-\psi_1)}.
\]
Heuristically, $ev^*_1(h-\lambda)$ corresponds to the deformation of the $1$-st
marking, and $ev^*_1(\lambda-h)-\psi_i$ corresponds to the formal deformation of
smoothing the node between $\tilde v_1$ and the edge. We note that
$\bM_{\tilde\Gamma}\cong\bM_{0,n+1}(\Pp(V),\pi^!\beta+f)$. Fiber product does
not affect the space because the target of
$ev_1$ and the deformation space of the edge are both $\Pp(V)$. 

Due to the similarity of the graphs, we have a similar expression for the residue of $\Gamma$.
\[
  Res(\Gamma)=\displaystyle\int_{\bM_{\Gamma}}
  \dfrac{1}{e_{\C^*}(V_{0,n+1,\beta+f})}\dfrac{ev^*_1(h^e\pi^*\alpha)\prod\limits_{i=1}^n ev^*_{i+1}\pi^*\sigma_n}{ev^*_1(h-\lambda)(ev^*_1(\lambda-h)-\psi_1)}.
\
\]
But note that $\bM_{\Gamma}\cong \bM_{0,n+1}(X,\beta)
\times_{(ev_1,\pi)} \Pp_X(V)$, where $\times_{(ev_1,\pi)}$ means the fiber
product of $ev_1$ and $\pi$. Here we abuse notation by using the same $ev_1$
for the evaluation map of $\bM_{0,n+1}(X,\beta)$. We warn the reader that this
evaluation map is only used to glue with the edge. This map is completely
different from the $ev_1$ of $\bM_{\Gamma}$ whose target is $\Pp(V)$ instead
of $X$. 

Although the above integrands are similar, in order to get a numerical result,
we need to push them forward to a point. It's very straightforward for
$\bM_{\tilde\Gamma}$. But for $\bM_{\Gamma}$, there is an extra projection along
the fibers of $\Pp(V)$. This is why we get
\[
\langle \pi_*\left(\dfrac{h^e\pi^*\alpha}{(h-\lambda)(\lambda-h-\psi)}\right), \sigma_1,\ldots, \sigma_n \rangle_{0,n+1,\beta}^{X, V}
\]
on the right hand side of equation \eqref{eqn:rel2}.

For equations \eqref{eqn:rel3} and \eqref{eqn:rel4}, the reasonings are similar.
We note that in order to derive equation \eqref{eqn:rel4}, the $k$-fold
covering of the edge give rise to extra factors in the virtual normal bundles. It
corresponds to a factor
\[
  ev^*_1\dfrac{1}{\prod\limits_{m=1}^k(\dfrac{m}{k}(h-\lambda))\prod\limits_{m=1}^{k-1}(\dfrac{m}{k}(\lambda-h))}
\]
in $\bM_{\tilde\Gamma}$, and to a factor
\[
  ev^*_1\dfrac{1}{\prod\limits_{m=1}^k \left( \dfrac{m}{k}(h-\lambda)\right) \prod\limits_{m=1}^{k-1}\left(c_V(h+\dfrac{m}{k}(\lambda-h))\right)} 
\]
in $\bM_\Gamma$.
The rest of the argument is similar and straightforward.

\newpage
\bibliographystyle{cdraifplain}
\bibliography{bib}

\end{document}